\newcolumntype{L}{>{\arraybackslash}m{4.5cm}}
\theoremstyle{plain}
\newtheorem{theorem}{Theorem}[section]
\newtheorem{proposition}{Proposition}[section]
\newtheorem{lemma}{Lemma}[section]
\newtheorem{corollary}{Corollary}[section]
\theoremstyle{definition}
\newtheorem{definition}{Definition}[section]
\newtheorem{example}{Example}[section]
\newtheorem{alg}{Algorithm}[section]
\newtheorem{question}{Question}
\newtheorem{remark}{Remark}[section]
\newcommand{\comm}[1]{}
\newcommand{\lboxed}[1]{\begin{array}{|l}\hline#1\\\hline\end{array}}
\newcommand{\llboxed}[1]{{%
  \let\@frameb@x\l@frameb@x\fbox{#1}%
}}
\newcommand{\rrboxed}[1]{{%
  \let\@frameb@x\r@frameb@x\fbox{#1}%
}}
\def\l@frameb@x#1{%
  \@tempdima\fboxrule
  \advance\@tempdima\fboxsep
  \advance\@tempdima\dp\@tempboxa
  \hbox{%
    \lower\@tempdima\hbox{%
      \vbox{%
        \hrule\@height\fboxrule
        \hbox{%
          \vrule\@width\fboxrule
          #1%
          \vbox{%
            \vskip\fboxsep
            \box\@tempboxa
            \vskip\fboxsep}%
          #1%
                    }%
        \hrule\@height\fboxrule}%
                          }%
        }%
}
\def\r@frameb@x#1{%
  \@tempdima\fboxrule
  \advance\@tempdima\fboxsep
  \advance\@tempdima\dp\@tempboxa
  \hbox{%
    \lower\@tempdima\hbox{%
      \vbox{%
        \hrule\@height\fboxrule
        \hbox{%
          #1%
          \vbox{%
            \vskip\fboxsep
            \box\@tempboxa
            \vskip\fboxsep}%
          #1%
          \vrule\@width\fboxrule}%
        \hrule\@height\fboxrule}%
                          }%
        }%
}
\renewcommand{\l}{\ell}
\title{Summand minimality and asymptotic convergence of generalized Zeckendorf decompositions}
\author{Katherine Cordwell}
\email{\textcolor{blue}{\href{mailto:ktcordwell@gmail.com}{ktcordwell@gmail.com}}}
\address{Department of Mathematics, University of Maryland, College Park, MD 20742}
\author{Max Hlavacek}
\email{\textcolor{blue}{\href{mailto:mhlavacek@g.hmc.edu}{mhlavacek@g.hmc.edu}}}
\address{Department of Mathematics, Harvey Mudd College, Claremont, CA 91711}
\author{Chi Huynh}
\email{\textcolor{blue}{\href{mailto:nhuynh30@gatech.edu,huynhngocyenchi@gmail.com}{nhuynh30@gatech.edu,huynhngocyenchi@gmail.com}}}
\address{School of Mathematics, Georgia Institute of Technology, Atlanta, GA 30332}
\author{Steven J. Miller}
\email{\textcolor{blue}{\href{mailto:sjm1@williams.edu, Steven.Miller.MC.96@aya.yale.edu}{sjm1@williams.edu,Steven.Miller.MC.96@aya.yale.edu}}}
\address{Department of Mathematics and Statistics, Williams College, Williamstown, MA 01267}
\author{Carsten Peterson}
\email{\textcolor{blue}{\href{mailto:carstenp@umich.edu}{carstenp@umich.edu}}}
\address{Department of Mathematics, University of Michigan, Ann Arbor, MI 48109}
\author{Yen Nhi Truong Vu}
\email{\textcolor{blue}{\href{mailto:ytruongvu17@amherts.edu}{ytruongvu17@amherts.edu}}}
\address{Department of Mathematics, Amherst College, Amherst, MA 01002}
\begin{document} 

\thanks{The fourth named author was partially supported by NSF Grants DMS1561945 and DMS1265673, while the last named author was supported by Professor Amanda Folsom and her NSF Grant DMS1449679. The authors thank the SMALL REU Program at Williams College, which is supported by NSF Grant DMS1347804 and the Williams College Science Center. The authors are also thankful for the detailed and helpful comments of the referee, which have greatly improved the exposition of this paper and allowed us to better connect our results with the existing literature. We would also like to thank Jeffrey Lagarias for recommending several relevant papers to us.}

\maketitle
\begin{abstract}
Given a recurrence sequence $H$, with $H_n = c_1 H_{n-1} + \dots + c_t H_{n-t}$ where $c_i \in \mathbb{N}_0$ for all $i$ and $c_1, c_t \geq 1$, the generalized Zeckendorf decomposition (gzd) of $m \in \mathbb{N}_0$ is the unique representation of $m$ using $H$ composed of blocks lexicographically less than $\sigma = (c_1, \dots, c_t)$. We prove that the gzd of $m$ uses the fewest number of summands among all representations of $m$ using $H$, for all $m$, if and only if $\sigma$ is weakly decreasing. We develop an algorithm for moving from any representation of $m$ to the gzd, the analysis of which proves that $\sigma$ weakly decreasing implies summand minimality. We prove that the gzds of numbers of the form $v_0 H_n + \dots + v_\ell H_{n-\ell}$ converge in a suitable sense as $n \to \infty$; furthermore we classify three distinct behaviors for this convergence. We use this result, together with the irreducibility of certain families of polynomials, to exhibit a representation with fewer summands than the gzd if $\sigma$ is not weakly decreasing.
\end{abstract}
\section{Introduction}

Base-$d$ number systems, with $d \in \mathbb{N}_{\geq 2}$ have been used for millennia. A natural question is whether there exist number systems whose base is non-integral, or whose terms are not simply a geometric progression. Two closely related approaches to answering this question are $\beta$-expansions and linear numeration systems.

If $\beta \in \mathbb{R}_{> 1}$, then given $x \in \mathbb{R}_+$, one may form the \textit{$\beta$-expansion} of $x$ by first finding the largest $k \in \mathbb{Z}$ such that $\beta^k \leq x < \beta^{k+1}$, then recording $x_k = \lfloor x/\beta^k \rfloor$, and finally repeating the above procedure on $x - x_k \beta^k$. One obtains a representation $x = x_k \beta^k + x_{k-1} \beta^{k-1} + \cdots$. R\'{e}nyi \cite{Re} first introduced $\beta$-expansions in the more general context of representing numbers using monotonic functions (here $f(x) = x/\beta$), and Parry \cite{Pa} subsequently proved many fundamental results specific to $\beta$-expansions. They have been studied extensively in ergodic theory and dynamical systems (see, e.g., \cite{Bl}) with many interesting connections to number theory (see, e.g., \cite{Be, Schm}). 

An alternative line of thought originates from recognizing $\{1, d, d^2, \dots \}$ as an order-one linear recurrence sequence $H = \{H_n\}_{n = 0}^\infty$, with $H_0 = 1$ and $H_n = d H_{n-1}$. In general, given a linear recurrence sequence $H$, one may study representations of integers as sums of non-negative integral multiples of terms in $H$. In the computer science literature, these are known as linear numeration systems, and they have been studied largely from an automata theory perspective (see, e.g., \cite{Fr, Sh}). However, such number systems have appeared under a variety of names, such as $G$-ary expansions (with $G$ a linear recurrence sequence, see \cite{PT,GT, GTNP}), or without a clear name (e.g., \cite{Fra}). One of the earliest and most celebrated results on linear numeration systems is Zeckendorf's theorem, which states that every integer can be uniquely expressed as a sum of nonconsecutive Fibonacci numbers (provided we define them by $F_1 = 1$, $F_2 = 2$, and $F_{n+1} = F_n + F_{n-1}$). Though the theorem is often attributed to Zeckendorf \cite{Ze}, the result was proven earlier by Lekkerkerker \cite{Le} and is implicit in the work of Ostrowski \cite{Os}. 

It is immediate to see that the usual base-$d$ representation is the result of greedily representing a number using powers of $d$. Thus, given a recurrence sequence, it is natural to study the representation obtained via the greedy algorithm, which is known as the normal representation (see, e.g., \cite{Fr}). Alternatively, one may view the base-$d$  number system as only permitting representations which are built out of the digits $\{0, 1, \dots, d-1\}$. Thus, given a recurrence sequence, one may instead seek simple rules designating which representations are considered allowable, and which guarantee existence and uniqueness of a representation for every integer (for example, only considering representations built from a fixed family of digits). A particularly natural such rule is called the \textit{generalized Zeckendorf decomposition (gzd)} and is the focus of this paper. While in some cases the gzd is the same as the normal representation, they are in general not the same (see Section \ref{section_greedy}).

The gzd is only defined for linear recurrence sequences of the form $H_n = c_1 H_{n-1} + \dots + c_{t} H_{n-t}$ with $c_1, \ c_t \geq 1$ and $c_i \in \mathbb{N}_0$, and generated by ``ideal'' initial conditions $H_{-(t-1)} = \dots = H_{-1} = 0$ and $H_0 = 1$. We call such a sequence a \textit{positive linear recurrence sequence (PLRS)}. We call $\sigma = (c_1, \dots, c_t)$ the \textit{signature} of the recurrence sequence; given $\sigma$, we let $H_\sigma$ denote the corresponding recurrence sequence. The gzd was introduced for PLRSs independently by Hamlin and Webb \cite{HW}, and Miller and Wang \cite{MW}, though smaller classes of signatures had been considered previously \cite{Fra, GTNP}.  Several authors have given evidence that this is the largest class of signatures for which Zeckendorf's theorem extends in a simple way \cite{HW, CFHMN}. One main reason is because if $c_1 = 0$ or $c_i < 0$, then $H_\sigma$ is no longer strictly increasing and $x^t - c_1 x^{t-1} - \dots - c_t$ no longer necessarily has a unique dominating positive root (see also Proposition \ref{prop_pos_type}).

Classically many authors have studied the number of summands in the Zeckendorf decomposition (e.g., \cite{Le}, among others). More recently, various authors have studied questions related to the number of summands in the generalized Zeckendorf decomposition (\cite{PT, GT, GTNP, MW}, among others).

We are particularly interested in the number of summands in the gzd as compared to other representations of the same integer. We call a representation of $m$ \textit{summand minimal} if no other representation of $m$ uses fewer summands. We say that a PLRS $H$ is \textit{summand minimal} if the gzd using $H$ is summand minimal for all $m$. We completely classify which PLRSs are summand minimal.

\begin{theorem}\label{thm_summand_minimal}
A positive linear recurrence sequence with signature $(c_1, \dots, c_t)$ is summand minimal if and only if $c_1 \geq c_2 \geq \dots \geq c_t$. 
\end{theorem}

Let $\textnormal{Fin}(\beta)$ denote the set of real numbers with finite $\beta$-expansions. Theorem \ref{thm_summand_minimal} is strikingly similar to the following result of Frougny and Solomyak \cite{FS}.

\begin{theorem}[Theorem 2 of \cite{FS}] \label{thm_fs}
Let $\beta$ be the dominating root of a polynomial of the form $x^t - c_1 x^{t-1} - \dots - c_t$ with $c_i \in \mathbb{N}$ and $c_1 \geq c_2 \geq \dots \geq c_t \geq 1$. Then,
\begin{equation}
\textnormal{Fin}(\beta)\ =\ \mathbb{Z}[\beta^{-1}]_+\ :=\ \mathbb{Z}[\beta^{-1}] \cap \mathbb{R}_+. \label{eqn_f_prop}
\end{equation}
\end{theorem}

\noindent Summand minimality is closely related to (and may be seen as a strengthening of) Equation \eqref{eqn_f_prop}. In Section \ref{section_conclusion} we more thoroughly explain the relationship between Theorem \ref{thm_summand_minimal} and Theorem \ref{thm_fs}.

In Section \ref{section_prelim}, we discuss the generalized Zeckendorf theorem for PLRSs, and introduce the notions of borrowing and carrying. In Section \ref{section_weakly_decreasing}, we show that $\sigma$ weakly decreasing implies summand minimality by exhibiting a simple algorithm for moving from an arbitrary representation to the gzd when $\sigma$ is weakly decreasing. Analysis of this algorithm shows that the number of summands never increases along the way to the gzd, implying summand minimality of $H_\sigma$. For the sake of moving the reader forward towards the techniques and results of the second half, which we believe to be of greater significance, we relegate the description of a more general algorithm (Algorithm \ref{alg_gzd}) for moving between representations which works for all signatures, as well as a proof of its validity and termination, to Appendix \ref{section_algorithm}.

To prove that summand minimality implies a weakly decreasing signature, we turn our attention in a different direction. Given a finite string $v = (v_0, v_1, \dots, v_\ell)$, let $D_n = v_0 H_n + \dots + v_{\ell} H_{n-\ell}$. We call $D = \{D_n\}$ the \textit{derived sequence of $H$ with respect to $v$}. We study the question of what the gzd of $D_n$ looks like as $n \to \infty$. Our main result is Theorem \ref{thm_converge}, which essentially says that these gzds converge in a suitable sense. Additionally Theorem \ref{thm_converge} classifies three distinct types of behavior for this convergence. Theorem \ref{thm_converge} is closely related to the following result of \cite{GTNP} (which, interestingly, utilizes Theorem \ref{thm_fs} in its proof).

\begin{theorem} [Corollary 1 of \cite{GTNP}] \label{thm_grabner}
Suppose $H$ is a PLRS with weakly decreasing signature. Let $k \in \mathbb{N}$. Then the gzd of $k H_n$ is equal to $\sum_{i = n-a}^{n+b} r_i H_{n+i}$ for all $n$ with $a, \ b, \ r_i$ constants. Furthermore, $\sum_{i = -a}^b r_i \beta^i$ is the $\beta$-expansion of $k$. Thus in particular the number of summands in the gzd of $k H_n$ does not change as $n \to \infty$.
\end{theorem}

We remark briefly that using the terminology developed later in this paper (see Section \ref{section_prelim_2}), Theorem \ref{thm_grabner} says that if $\sigma$ is weakly decreasing, then the derived sequence of $H_\sigma$ with respect to $(k)$, call it $D$, is in Case 1 of Theorem \ref{thm_converge}, and thus $D$ converges in gzd to $r_b r_{b-1} \cdots r_0.r_{-1} \cdots r_{-a}$.

In Section \ref{section_prelim_2}, we start by discussing relevant properties of linear recurrence sequences, introduce derived sequences and extended representations, and discuss how the gzd is the output of a modified greedy algorithm. In Section \ref{section_convergence}, we discuss convergence in gzd and then state and prove Theorem \ref{thm_converge}. We believe Theorem \ref{thm_converge} is of interest in its own right independent of its use in proving Theorem \ref{thm_summand_minimal}. In Section \ref{section_summand_minimality}, we first prove using a simple trick that if $\sigma$ does not satisfy the Parry condition (see Definition \ref{defn_parry}) and $c_1 \neq 1$, then $H_\sigma$ is not summand minimal. We then show that if $\sigma$ does satisfy the Parry condition or $c_1 = 1$, and $H_\sigma$ is summand minimal, then $\sigma$ is weakly decreasing. The idea of the proof is to consider the derived sequence of $H_{\sigma}$ with respect to $(c_1+1)$. If we end up in Cases 2 or 3 of Theorem \ref{thm_converge}, then the gzd of $(c_1+1) H_n$ has arbitrarily many summands for $n$ large (and in particular more than $c_1+1$ summands), and thus $H_\sigma$ is not summand minimal. We then show that if $\sigma$ satisfies the Parry condition and we are in Case 1 of Theorem \ref{thm_converge} and $H_\sigma$ is summand minimal, then $\sigma$ is weakly decreasing. The proof utilizes a theorem of Brauer \cite{Br} that $x^t - c_1 x^{t-1} - \dots - c_t$ is irreducible in $\mathbb{Q}[x]$ if $c_1 \geq \dots \geq c_t \geq 1$ with $c_i \in \mathbb{Z}$. The case that $c_1 = 1$ instead utilizes a theorem of Schinzel \cite{Schi} regarding the factorization over $\mathbb{Q}[x]$ of the polynomials $x^r - 2 x^s + 1$.
\section{Preliminaries} \label{section_prelim}

\subsection{The generalized Zeckendorf decomposition}
A \textit{(linear) recurrence sequence} is a sequence $H = \{H_n\}_{n = \ell}^\infty$ which satisfies a linear recurrence relation, i.e., there exist constants $a_0, a_1, \dots,  a_t \in \mathbb{C}$ such that $a_0 H_n + a_1 H_{n-1} + \dots + a_t H_{n-t} = 0$ for all $n \geq t+\ell$. In this case we say that $H$ satisfies the recurrence relation $(a_0, \dots, a_t)$. We call $\ell$ the \textit{minimal index of $H$}.

Suppose $H$ is a recurrence sequence with minimal index $-(t-1)$. We say $H$ has \textit{ideal initial conditions} if $H_{-(t-1)} = \dots = H_{-1} = 0$ and $H_0 = 1$. We shall be particularly interested in the case where $H$ has ideal initial conditions and $H_n = c_1 H_{n-1} + \dots + c_t H_{n-t}$ with $c_1, \ c_t \geq 1$ and $c_i \in \mathbb{N}_0$. In this case we call $\sigma = (c_1, \dots, c_t)$ the \textit{signature of $H$} and we say that $H$ is a \textit{positive linear recurrence sequence (PLRS)} (except when $H_n = H_{n-1}$ with $H_0 = 1$, which we do not consider to be a valid PLRS). A PLRS with signature $\sigma$ will be denoted by $H_\sigma$.

\begin{definition}\label{defn_representation}
Suppose $\rho = (r_n, r_{n-1}, \dots, r_\ell)$ is a tuple of non-negative integers. Let $\rho' = \sum_{i=\ell}^s r_i H_i$. We say that $\rho$ is a \textit{representation of $\rho'$ using $H$}.
\end{definition}

\begin{definition}\label{def:block}
Suppose $\sigma = (c_1, \dots, c_t)$. Then $(b_1, \dots, b_k)$ is called an \textit{allowable block} (or \textit{valid block}) if $k \leq t$, $b_i = c_i$ for $i < k$, and $0 \leq b_k < c_k$. We say that a representation is \textit{allowable} if it is composed of concatenated allowable blocks.
\end{definition}

\begin{example}
If $\sigma = (4, 3, 2)$ then the set of allowable blocks is
$$\{(0), (1), (2), (3), (4, 0), (4, 1), (4, 2), (4, 3, 0), (4, 3, 1)\}.$$
If $\sigma = (2, 0, 0, 3)$ then the allowable blocks are
$$\{(0), (1), (2, 0, 0, 0), (2, 0, 0, 1), (2, 0, 0, 2)\}.$$
\end{example}

\begin{theorem}[Generalized Zeckendorf theorem; Theorem 1.1 in \cite{HW} and Theorem 1.1 in \cite{MW}]\label{thm:gzd}
Given a PLRS, $H_\sigma$, every non-negative integer $m$ has a unique allowable representation using $H_\sigma$.
\end{theorem}

\begin{definition}
The representation as in Theorem \ref{thm:gzd} is called the \textit{generalized Zeckendorf decomposition (gzd)}. Given $m \in \mathbb{N}_0$, we let $\textnormal{GZD}(m)$ denote the gzd of $m$.
\end{definition}

\begin{example}
Suppose $\sigma = (1, 1)$, the signature for the Fibonacci numbers. The allowable blocks are $\{(0), (1, 0)\}$. Therefore, Theorem \ref{thm:gzd} implies that every integer has a unique representation composed of $(0)$ and $(1, 0)$, which is Zeckendorf's theorem.
\end{example}

The next definition is most often used in the context of $\beta$-expansions. It was first noted by Parry \cite{Pa} to be of great importance in the study of $\beta$-expansions.
\begin{definition} \label{defn_parry}
A string $(c_1, \dots, c_t)$ is said to satisfy the \textit{Parry condition} if all of the following hold:
\begin{equation}
\begin{split}
(c_1, c_2, \dots, c_t) &\ >_\mathrm{lex}\ (c_2, c_3, \dots, c_t, 0),\\
(c_1, c_2, \dots, c_t)    &\ >_\mathrm{lex}\ (c_3, \dots, c_t, 0, 0), \\
    &\ \ \ \vdots \\
(c_1, c_2, \dots, c_t)    & \ >_\mathrm{lex}\ (c_t, 0, \dots, 0). \label{eqn_parry}
\end{split}
\end{equation}
\end{definition}

\begin{remark} \label{remark_parry_block}
Prior to the work of Hamlin and Webb \cite{HW}, and Miller and Wang \cite{MW}, the generalized Zeckendorf theorem was only known in the case where $\sigma$ satisfies the Parry condition (in particular, when $\sigma$ is weakly decreasing). In that case, a representation $\rho = (r_n, \dots, r_0)$ is allowable if and only if $(r_{i-1}, \dots, r_{i-t}) <_\mathrm{lex} (c_1, \dots, c_t) = \sigma$ for all $i$ (see, e.g., \cite{GTNP}).
\end{remark}

\subsection{Borrowing and carrying}

Suppose $H = H_\sigma$ with $\sigma = (c_1, \dots, c_t)$. If $t \geq 2$, then $H$ contains zeros, namely $H_{-1}, \dots, H_{-(t-1)}$. Let $\rho = (r_n, \dots, r_{-(t-1)})$. Clearly $\sum_{i = -(t-1)}^n r_s H_s = \sum_{i = 0}^n r_s H_s$. Thus any $r_i$ with $-(t-1) \leq i \leq -1$ may be changed to an arbitrarily large number without having a meaningful effect on $\rho$ as a representation of $\rho'$. Because of this we shall always assume that any representation has all of its negative index entries equal to $\infty$. We use the shorthand $\infty_{t-1}$ to denote $\underbrace{\infty, \dots, \infty}_{t-1}$. The utility of using these infinities shall become clear momentarily.

The signature provides a way of moving between representations of the same number. For example, if $\sigma = (10)$, one representation for $312$ is $(3, 1, 2)$ (indeed, this is the gzd). However, we may also represent $312$ as $(2, 11, 2)$ (by ``borrowing'' from the 100's place). Analogously, say we have the representation $(6, 23, 4)$. Since the 10's place currently has $23 \geq 10$, we can ``carry'' to the 100's place to get the representation $(7, 13, 4)$, and then carry again to get $(8, 3, 4)$.

The ideas of ``borrowing'' and ``carrying'' from base-$d$ arithmetic extend to all recurrences. For example, suppose $\sigma = (2, 1)$. Then $H_\sigma = \{0, 1, 2, 5, 12, \dots\}$. Given the representation $(3, 0, 0, \infty)$ (which represents $15 = 3 \cdot 5$), we can ``borrow'' from index $2$ to get the representation $(2, 2, 1, \infty)$ (which still represents $15 = 2 \cdot 5 + 2 \cdot 2 + 1 \cdot 1$). If we subsequently borrow from index $1$, we get $(2, 1, 3, \infty+1)$. If we extend our arithmetic to include $\infty$ such that $\infty \pm n = \infty$ for any $n < \infty$, and $\infty \cdot 0 = 0$, then we can still ``borrow'' even when it results in terms accumulating in the ``infinities places.'' 

Suppose now we have the representation $(3, 4, \infty)$ (with $\sigma = (2, 1)$ as before). Since $3 \geq 2 = c_1$ and $4 \geq 1 = c_2$, we can ``carry'' to index 2 to get $(1, 1, 3, \infty)$. We can carry again to index 1 to get the representation $(1, 2, 1, \infty-1) = (1, 2, 1, \infty)$. Thus, using $\infty$ also allows us to ``carry'' even when the carry operation utilizes the infinities places.

\begin{definition}\label{def:carryborrow} 
Let $\rho = (r_n, r_{n-1}, \dots, r_0,\infty_{t-1})$ be a representation using $H_\sigma$ with $\sigma = (c_1, \dots, c_t)$. Let $\mathcal{B}(\rho, i)$ and $\mathcal{C}(\rho, i)$ be defined as
\begin{gather*}
\mathcal{B}(\rho, i)\ :=\ (r_n, \dots, r_i - 1, r_{i-1}+c_1, \dots, r_{i-t} + c_t, r_{i-(t+1)}, \dots, r_0, \infty_{t-1}), \\
\mathcal{C}(\rho, i)\ :=\ (r_n, \dots, r_i+1, r_{i-1} - c_1, \dots, r_{i - t} - c_t, r_{i - (t+1)}, \dots, r_0,\infty_{t-1}).
\end{gather*}
We call the application of $\mathcal{B}(\rho, i)$ \textit{borrowing from $i$} and the application of $\mathcal{C}(\rho, i)$ \textit{carrying to $i$} (see Tables \ref{table_borrow} and \ref{table_carry}).
\end{definition}
\vspace{-0.25cm}
\begin{table}[H]
\centering
\resizebox{\textwidth}{!}{
    \begin{tabular}{| c || c | c | c | c | c | c | c | c | c | c | c | c |}
    \hline
    Remark ($\downarrow$)/Index ($\to$)       & ${n}$ & $\cdots$ & ${i}$ & ${i-1}$  & $\cdots$ & ${i-t}$  & ${i-(t+1)}$ & $\cdots$ & ${0}$ & ${-1}$ & $\cdots$ & ${-(t-1)}$ \\ \hline \hline
    ${\rho}$ & $r_n$        & $\cdots$ & $r_i$        & $r_{i-1}$       & $\cdots$ & $r_{i-t}$       & $r_{i-(t+1)}$      & $\cdots$ & $r_0$        & $\infty$      & $\cdots$ & $\infty$\\
    Borrow from $i$.       & ~            & ~       & $-1$         & $c_1$           & $\cdots$ & $c_t$           & ~                  & ~       & ~            & ~             & ~       & ~\\ \hline
    ${\mathcal{B}(\rho, i)}$ & $r_n$        & $\cdots$ & $r_i - 1$    & $r_{i-1} + c_1$ & $\cdots$ & $r_{i-t} + c_t$ & $r_{i - (t+1)}$    & $\cdots$ & $r_0$        & $\infty$      & $\cdots$ & $\infty$\\ \hline
    \end{tabular}
    }
\caption{Borrow from $i$.} \label{table_borrow}
\end{table}
\vspace{-0.25cm}
\begin{table}[H]
\centering
\resizebox{\textwidth}{!}{
    \begin{tabular}{| c || c | c | c | c | c | c | c | c | c | c | c | c |}
    \hline
    Remark ($\downarrow$)/Index ($\to$)  & ${n}$ & $\cdots$ & ${i}$ & ${i-1}$  & $\cdots$ & ${i-t}$  & ${i-(t+1)}$ & $\cdots$ & ${0}$ & ${-1}$ & $\cdots$ &    ${-(t-1)}$ \\ \hline \hline
    ${\rho}$ & $r_n$        & $\cdots$ & $r_i$        & $r_{i-1}$       & $\cdots$ & $r_{i-t}$       & $r_{i-(t+1)}$      & $\cdots$ & $r_0$        & $\infty$      & $\cdots$   &   $\infty$ \\
    Carry to $i$.	& ~            & ~       & $1$          & $-c_1$          & $\cdots$ & $-c_t$          & ~                  & ~       & ~            & ~             & ~ &  ~     \\ \hline
    ${\mathcal{C}(\rho, i)}$ & $r_n$        & $\cdots$ & $r_i + 1$    & $r_{i-1} - c_1$ & $\cdots$ & $r_{i-t} - c_t$ & $r_{i - (t+1)}$    & $\cdots$ & $r_0$        & $\infty$      & $\cdots$   &   $\infty$ \\ \hline
    \end{tabular}
    }
\caption{Carry to $i$.} \label{table_carry}
\end{table}

\begin{definition}
Let $\rho = (r_n, \dots, r_0, \infty_{t-1})$ be a representation using $H_\sigma$. We say that we are \textit{able to carry to $i$} if $r_{i-j} \geq c_j$ for all $1 \leq j \leq t$, i.e., $(r_{i-1},\dots,r_{i-t})\geq (c_1,\dots,c_t)$ pointwise.
\end{definition}

\begin{remark}\label{remark_sum_bor_car}
When we borrow from/carry to an index $i\geq t$, the change in the number of summands is $\pm(-1+c_1+\dots+c_t)$. When we borrow from/carry to an index $i<t$, the change in the number of summands is $\pm(-1+c_1+\dots+c_i)$. Notice that if we borrow from an index $i$ and then carry to an index $j$ with $j > i$, then the net change in the number of summands is non-positive.
\end{remark}
\section{Weakly decreasing signature implies summand minimality} \label{section_weakly_decreasing}

\begin{proposition} \label{prop_weakly_decreasing}
Suppose $\sigma$ is weakly decreasing. Then $H_\sigma$ is summand minimal.
\end{proposition}
\begin{proof}
Suppose $\sigma$ is weakly decreasing. Let
\begin{equation*}
\rho = \rho_1 = (r_n, \dots, r_0, \infty_{t-1})
\end{equation*}
be a non-allowable representation using $H_\sigma$. Let $i_1$ be the largest index such that 
\begin{equation} \label{eqn_rho_big}
(r_{i_1-1}, \dots, r_{i_1 - t}) \geq_\mathrm{lex} (c_1, \dots, c_t),
\end{equation}
which must exist by Remark \ref{remark_parry_block}. If we are able to carry to $i_1$, then we let $\rho_2 = \mathcal{C}(\rho_1, i_1)$. Then $\rho_2$ clearly has at most as many summands as $\rho_1$. 

If we are not able to carry to $i_1$, then we find the largest $j_1$ (necessarily less than $t$) such that
\begin{gather*}
\begin{split}
r_{i_1-1} &\geq c_1, \\
&\vdots \\
r_{i_1 - (j_1-1)} &\geq c_{j_1-1}, \\
r_{i_1 - j_1} &> c_{j_1}.
\end{split}
\end{gather*}
We then borrow from $i_1-j_1$ to obtain $\nu_1 = \mathcal{B}(\rho_1, i_1-j_1) = (s_n, \dots, s_0, \infty_{t-1})$. Notice that
\begin{align}
s_{i_1-1} = r_{i_1-1} &\geq c_1, \notag \\
& \vdots \notag \\
s_{i_1 - (j_1-1)} &\geq c_{j_1-1}, \notag \\
s_{i_1 - j_1} = r_{i_1 - j_1} - 1 &\geq c_{j_1}, \notag \\
s_{i_1 - (j_1+1)} = r_{i_1 - (j_1+1)} + c_1 \geq c_1 &\geq c_{j_1+1}, \ \ \ \ \ \ \ \ \ \ \ \ \ \ \ \ \ \ \ \ \ \ \ \ \ \ \ \ \ \label{eqn_weakly_first}\\
& \vdots \\
s_{i_1 - t} = r_{i_1 - t} + c_{t-j_1} \geq c_{t-j_1} &\geq c_t. \label{eqn_weakly_last}
\end{align}
Thus we are able to carry to $i_1$ (the fact that $\sigma$ is weakly decreasing is used in Equations \eqref{eqn_weakly_first}-\eqref{eqn_weakly_last}). Let $\rho_2 = \mathcal{C}(\nu_1, i_1)$. Since $i_1 > i_1-j_1$ (as $1 \leq j_1 \leq t$), the number of summands in $\rho_2$ is less than or equal to the number of summands in $\rho_1$ by Remark \ref{remark_sum_bor_car}.

We repeat the above procedure to find $i_2$ such that Equation \eqref{eqn_rho_big} holds, with $i_1$ replaced by $i_2$, and $r_k$ now representing the entries of $\rho_2$, etc. If this procedure terminates, which is to say that at the $\ell$th iteration such an $i_\ell$ is not found, then $\rho_\ell$ is allowable and hence is the gzd of $\rho'$ (see Remark \ref{remark_parry_block}). Since the number of summands never increases, we conclude that $H_\sigma$ is summand minimal. The proof that this procedure terminates can be found in Appendix \ref{section_algorithm} (as well as a description and proof of termination of a more general algorithm for moving from any representation to the gzd for all signatures).
\end{proof}

\begin{example}
To illustrate the proof of Proposition \ref{prop_weakly_decreasing}, we consider the example with $\sigma = (3,2,1,1)$ and $\rho= (3,2,1,0,4,3,0,\infty_3)$.

\begin{table}[H]
\resizebox{\textwidth}{!}{
\begin{tabular}{|L||c|c|c|c|c|c|c|c|c|c|c||c|}
\hline
Remark ($\downarrow$)/Index  ($\to$)        & $7$ & $6$ & $5$ & $4$ & $3$ & $2$ & $1$ & $0$ & $-1$     & $-2$     & $-3$     & Representation  \\
\hline \hline
$(4,3,0,\infty)>_\mathrm{lex} (3,2,1,1)$, so $i_1=3$. We cannot carry as $(4,3,0,\infty) \not\geq \sigma$ pointwise. &     & $3$ & $2$ & $1$ & $0$ & $4$ & $3$ & $0$ & $\infty$ & $\infty$ & $\infty$ & $\rho=\rho_1$  \\
Since $r_{3-1}=4 \geq c_1 = 3$ and $r_{3-2}=3>2=c_2$, we have $j_1=2$. We thus borrow from $i_1-j_1=1$. &  &     &     &     &     &     & -1    & 3    &  2    & 1          &  1          &  \\
\hline
We can carry to $i_1=3$ as $(3,2,3,\infty)\geq \sigma$ pointwise. &  &   3   &   2   &  1    &   0   &   4 &   2   &   3   &   $\infty$    &   $\infty$    &   $\infty$    &  $\nu_1=\mathcal B (\rho_1, 1)$ \\
 &    &   &   &   & 1  & -3  & -2 &  -1 & -1  &   &   &  \\
\hline
$(3,2,1,1) \geq_{\mathrm{lex}} \sigma$, so $i_2=7$. &  &   3   &   2   &  1    &   1   &   1 &   0   &   2   &   $\infty$    &   $\infty$    &   $\infty$    & $\rho_2=\mathcal C(\nu_1,3)$   \\

We can carry immediately to $i_2$. & 1  & -3    &  -2    & -1  & -1    &     &     &    &     &          &           &  \\
\hline
$i_3$ does not exist, so $\rho_3$ is the gzd of $\rho'$. & 1 &   0   &   0   &  0    &   0   &   1 &   0   &   2   &   $\infty$    &   $\infty$    &   $\infty$    & $\rho_3=\mathcal C(\rho_2,7)$  \\ \hline
\end{tabular}
}
\end{table}
\noindent Note that $13 = \# \textnormal{ of summands of }\rho_1 \geq \# \textnormal{ of summands of }\rho_2 \geq \# \textnormal{ of summands of }\rho_3 = 4$.
\end{example}

We remark that the the procedure used in the proof of Proposition \ref{prop_weakly_decreasing} is similar to the algorithm used by Frougny and Solomyak in \cite{FS} to prove Theorem \ref{thm_fs}.

\section{Preliminaries, round two} \label{section_prelim_2}

We now turn our attention towards proving Theorem \ref{thm_converge}, which says in essence that certain natural sequences of numbers (so-called derived sequences; see Definition \ref{defn_derived}) have convergent gzds. To make this notion precise, we must review some fundamental results about recurrence sequences, and introduce some new notation and terminology.

\subsection{Recurrence sequences}

Suppose $H = \{H_n\}_{n = 0}^\infty$ is a recurrence sequence (not necessarily a PLRS). For each recurrence relation $a = (a_0, \dots, a_t)$ that $H$ satisfies, we have an associated polynomial $\mathcal{P}_a(x) := a_0 x^t + a_1 x^{t-1} + \dots + a_t$. Let 
$$\mathcal{I}(H)\ := \ \{\mathcal{P}_a \in \mathbb{C}[x] : H \textnormal{ satisfies } a \}.$$ 
It is immediate to see that $\mathcal{I}(H)$ is an ideal in $\mathbb{C}[x]$. Since $\mathbb{C}[x]$ is a principal ideal domain, there is a unique monic polynomial of minimal degree which generates $\mathcal{I}(H)$. We call this polynomial the \textit{minimal polynomial of $H$}. The following is almost certainly a known result, though we were unable to find a clear reference in the literature. We prove it in Appendix \ref{appendix_min_poly} for completeness.

\begin{proposition}\label{prop_min_poly}
Suppose $H$ is a recurrence sequence generated by the recurrence relation $a = (1, -a_1, \dots, -a_t)$ with ideal initial conditions. Then $\mathcal{P}_a$ is the minimal polynomial of $H$.
\end{proposition}

\noindent In particular, any PLRS has $\mathcal{P}_{(1, -c_1, \dots, -c_t)}$ as its minimal polynomial.

One of the most fundamental results about recurrence sequences is the following (for a proof, see Corollary 2.24 in \cite{elaydi}).

\begin{theorem}\label{thm_growth}
Let $a = (a_0, a_1, \dots, a_t)$ be a recurrence relation. Let $\lambda_1, \dots, \lambda_s$ be the distinct roots of $\mathcal{P}_a$. Suppose $\lambda_i$ is a root of multiplicity $m_i$. Then a sequence $H$ satisfies $a$ if and only if 
\begin{gather}
H_n\ =\  p_1(n) \lambda_1^n + \dots + p_s(n) \lambda_s^n, \label{eqn_sum_roots}
\end{gather}
where $p_i(n)$ is a polynomial of degree at most $m_i - 1$. 
\end{theorem}

Given a polynomial $f$, we say that a root $\beta$ of $f$ \textit{dominates} or is \textit{dominating} if $|\beta| \geq |\lambda_i|$ for all roots $\lambda_i$ of $f$. We say that a monic polynomial is of \textit{positive type} if it is of the form $f = x^t - c_1 x^{t-1} - \dots - c_t$ with $c_1, c_t \geq 1$ and $c_i \geq 0$ and $f \neq x-1$. Thus PLRSs have minimal polynomials of positive type by Proposition \ref{prop_min_poly}. The following result has been rederived several times. We include its proof in Appendix \ref{appendix_pos_type} for completeness.

\begin{proposition}\label{prop_pos_type}
Suppose $f$ is a polynomial of positive type. Then $f$ has a unique real positive dominating root $\beta$ of multiplicity one with $\beta > 1$. All other roots are non-positive.
\end{proposition}

\begin{corollary}\label{cor_beta_coeff}
Suppose $H$ is a positive linear recurrence sequence generated by ideal initial conditions and with minimal polynomial $f$. Let $\beta = \lambda_1$ be the dominating root. Then $p_1(n)$ as in Equation \eqref{eqn_sum_roots} is a positive constant, $c$.
\end{corollary}

\begin{proof}
Since $\beta$ is a simple root, the polynomial $p_1(n)$ must be degree 0, and hence is equal to some constant $c$. If $c = 0$, then by Theorem \ref{thm_growth}, the polynomial $f(x)/(x-\beta)$ is in $\mathcal{I}(H)$, contradicting the fact that $f(x)$ is the minimal polynomial. Since $\beta$ dominates and $c \neq 0$, past some point, $c \beta^n$ and $H_n$ always have the same sign. However, clearly $\beta^n$ is always positive, and $H_n$ is always positive, so $c > 0$.
\end{proof}

\begin{corollary}\label{cor_no_pos}
Suppose $f$ is a polynomial of positive type. Let $\beta$ be its dominating root. Then $f(x)/(x-\beta)$ has no positive dominating root.
\end{corollary}

Corollary \ref{cor_no_pos} will be used critically with the following result of Bell and Gerhold \cite{BG}.

\begin{theorem}[Theorem 2 of \cite{BG}]\label{thm_oscillate}
Suppose $H$ is a (real) non-zero recurrence sequence with no positive dominating root. Then the sets $\{n \in \mathbb{N} : H_n > 0\}$ and $\{n \in \mathbb{N} : H_n < 0\}$ are both infinite.
\end{theorem}

\noindent In particular, if $f$ is of positive type, then a recurrence sequence whose minimal polynomial divides $f/(x-\beta)$ will oscillate between positive and negative infinitely often. This will be crucial in the proof of Case 2 of Theorem \ref{thm_converge}.

In the sequel, the letter $\beta$ will always refer to the dominating root of a polynomial of positive type.

\subsection{Extended representations and derived sequences}

Suppose $v = (v_0, v_{-1}, \dots, v_{-n})$ is a finite string with integer entries. We let $[v]$ denote the map from $\mathbb{Z}$ to $\mathbb{Z}$ defined by $[v](i) = v_i$ for $-n \leq i \leq 0$, and $[v](i) = 0$ otherwise. Alternatively, we can think of $[v]$ as the $\mathbb{Z}$-indexed bi-infinite string with $v$ ``placed'' at the zero index and padded with zeros everywhere else. We shall make use of both perspectives.

\begin{definition}
We call a map $\gamma: \mathbb{Z} \to \mathbb{Z}$ an \textit{extended representation} if $\gamma$ has compact support in the discrete topology.
\end{definition}

If the support of $\gamma$ is contained in $\mathbb{N}_0$, as 
before (see Definition \ref{defn_representation}) we define $\gamma'$ as
\begin{equation*}
\gamma'\ :=\ \gamma(0) H_0 + \gamma(1) H_1 + \cdots.
\end{equation*}
We can think of $\gamma'$ as the number represented by $\gamma$ using the sequence $H$.

Let $\mathcal{E}$ denote the space of all extended representations. Let $S: \mathcal{E} \to \mathcal{E}$ denote the shift map on extended representations, that is
$$S^n(\gamma)(i) := \gamma(i-n).$$
If we instead think of $\gamma$ as a $\mathbb{Z}$-indexed bi-infinite string with indices increasing from right to left (as is the case for usual base-$d$ number systems), then $S^n(\gamma)$ is the result of shifting $\gamma$ to the left by $n$ places (in case $n$ is negative, then $S^n(\gamma)$ is the result of shifting $\gamma$ to the right by $-n$ places).

If $\gamma$ is an extended representation, then 
\begin{gather*}
\mathcal{L}(\gamma)\ :=\ \sup \textnormal{supp} \ \gamma, \\
\mathcal{R}(\gamma)\ :=\ \inf \textnormal{supp} \ \gamma,
\end{gather*}
where $\textnormal{supp} \ \gamma$ is the support of $\gamma$. At times we shall represent an extended representation as
\begin{equation*}
\gamma\ =\ \gamma(\mathcal{L}(\gamma)) \ \gamma(\mathcal{L}(\gamma)-1) \cdots \gamma(0) . \gamma(-1) \cdots \gamma(\mathcal{R}(\gamma)),
\end{equation*}
with the decimal point lying between the zeroeth and minus first positions. We will also use the notation $(x_1 \cdots x_j)^m$ to denote $x_1 \cdots x_j$ concatenated with itself $m$ times. We let $(x_1 \cdots x_j)^\omega$ denote $x_1 \cdots x_j$ concatenated with itself a countably infinite number of times.

Given a finite string $v = (v_0, \dots, v_n)$, we define the polynomial
\begin{equation*}
\mathcal{P}_v(x)\ :=\ v_0 x^n + \cdots + v_n.
\end{equation*}
Given an extended representation $\gamma$, we define the rational function, 
\begin{equation*}
\mathcal{Q}_\gamma(x)\ :=\ \sum \gamma(i) x^i.
\end{equation*}
Given $\gamma$, we may also define the polynomial
\begin{equation*}
\mathcal{P}_\gamma(x)\ :=\ x^{-\mathcal{R}(\gamma)} \mathcal{Q}_\gamma (x).
\end{equation*}

\begin{remark}\label{remark_p_q}
In the sequel we will often be interested in the sign of $\mathcal{P}_\gamma(\beta)$. Since $\beta > 0$, the signs of $\mathcal{P}_\gamma(\beta)$ and $\mathcal{Q}_\gamma(\beta)$ are always the same.
\end{remark}

\begin{definition} \label{defn_derived}
Given a finite string $v = (v_k, \dots, v_0)$ (resp., extended representation $\gamma$) and a recurrence sequence $H$, we define the \textit{derived sequence of $H$ with respect to $v$} (resp., $\gamma$), denoted $D = \textnormal{Der}(v, H)$ (resp., $D = \textnormal{Der}(\gamma, H)$), with 
\begin{gather*}
D_n\ := \ v_k H_{n} + v_{k-1} H_{n-1} + \dots + v_0 H_{n-k} \\
\textnormal{(resp., } D_n\ :=\ \sum_i \gamma(i) \ H_{n+i} \textnormal{).}
\end{gather*}
\end{definition}

Some terms of $D$ may be undefined; however, past some finite point, all subsequent terms of $D$ are defined. Because we will almost always only care about sufficiently large terms of $D$, we do not worry about this issue.

\begin{proposition}
Let $H$ be a recurrence sequence and let $D$ be the derived sequence of $H$ with respect to some finite string or extended representation. Then every recurrence relation satisfied by $H$ is also satisfied by $D$.
\end{proposition}

\begin{proof}
For simplicity, we focus on the derived sequence with respect to $v = (v_k, \dots, v_0)$. The proof for the case of the derived sequence with respect to an extended representation is virtually identical.

Suppose $H$ is a recurrence sequence with $f \in \mathcal{I}(H)$. Let the roots of $f$ be $\lambda_1, \dots, \lambda_s$. By Theorem \ref{thm_growth}, we can write $H_n$ as
$$H_n\ =\ \sum_{i = 1}^s p_i(n) \lambda_i^n. $$
We are interested in 
\begin{equation}
\begin{split} \label{eqn_eval_poly}
D_n &\ =\  v_k H_{n} + \dots + v_0 H_{n-k} \\
    &\ =\  \sum_{j = 0}^k v_j \bigg ( \sum_{i = 1}^s p_i (n-j) \lambda_i^{n-j} \bigg) \\
    &\ =\  \sum_j v_j \bigg (\sum_i p_i (n-j) \lambda_i^{-j} \lambda_i^n \bigg )\\
    &\ =\  \sum_i \bigg (\sum_j v_j \ p_i(n-j) \lambda_i^{-j} \bigg ) \lambda_i^n.
\end{split}
\end{equation}
Notice that $\sum_j v_j \ p_i(n-j) \lambda_i^{-j}$ is a polynomial in $n$ of degree at most equal to the degree of $p_i$. Thus, by Theorem \ref{thm_growth}, we conclude that $f \in \mathcal{I}(D)$.
\end{proof}

\begin{remark}\label{remark_derived_zero}
It is possible for $D$ to satisfy more recurrence relations than $H$. For example, if $v$ is some finite string and $H$ satisfies $v$, then $D$ is identically zero and thus satisfies every recurrence relation. In fact, we have that $D$ is identically zero if and only if $H$ satisfies $v$.
\end{remark}

\begin{remark}\label{remark_coeff}
Suppose $H$ is a PLRS with minimal polynomial $f$ and $D = \textnormal{Der}(v, H)$. Since $\beta$ is a simple root, the coefficient of $\beta^n$ in the last line of Equation \eqref{eqn_eval_poly} is $c \mathcal{P}_v(\beta)$. Since $c > 0$ by Corollary \ref{cor_beta_coeff}, we have that if $\mathcal{P}_v(\beta) > 0$, then $D$ is eventually always positive. If $\mathcal{P}_v(\beta) < 0$, then $D$ is eventually always negative. Lastly, if $\mathcal{P}_v(\beta) = 0$, then the minimal polynomial for $D$ divides $f/(x-\beta)$ and thus $D$ oscillates between positive and negative infinitely often by Corollary \ref{cor_no_pos} and Theorem \ref{thm_oscillate}.
\end{remark}

\subsection{Gzd as output of a greedy algorithm} \label{section_greedy}

In linear numeration systems, one studies the representation obtained by the usual greedy algorithm. However, the gzd is not always the output of this greedy algorithm. For example, suppose $H = H_{(1, 3)}$. Consider the derived sequence for $(2)$. The dominating root of the minimal polynomial, $x^2 - x - 3$, is $\beta \approx 2.303 > 2$. Since $H_{n+1}/H_n \to \beta$ as $n \to \infty$, when applying the above greedy algorithm to $2 H_n$ for $n$ sufficiently large, no term with index greater than $n$ is ever selected. Therefore the greedy algorithm selects two $H_n$ terms and then terminates. However $S^n([2])$ is not the gzd for $2 H_n$ since it is not composed of allowable blocks.

However, the gzd is the output of a different greedy algorithm, namely the one which greedily selects allowable blocks as noted in \cite{HW}. To formally describe this algorithm, we first set some notation. Suppose our signature is $(c_1, \dots, c_t)$. Let $A = c_1 + \dots + c_t$. The set of allowable blocks has a natural ordering, namely 
$$(0)\ <\ (1)\ <\ \cdots\ <\ (c_1 - 1)\ <\ (c_1, 0)\ <\ \cdots\ <\ (c_1, c_2, \dots, c_t - 1).$$ 
Let $v_i$ denote the $i$th smallest block with respect to this ordering (with $v_0 = (0)$ and $v_{A-1} = (c_1, c_2, \dots, c_t - 1)$). Let 
$$\gamma_{i, n}\ :=\ S^n([v_i]),$$
with $1 \leq i \leq A-1$ and $n \in \mathbb{Z}$. Let 
$$\Lambda \ :=\ \{\gamma_{i, n} : \textnormal{supp} \ \gamma_{i, n} \subset \mathbb{N}_0\}.$$
The elements of $\Lambda$ have a natural total ordering, namely $\gamma_{i_1, n_1} > \gamma_{i_2, n_2}$ iff $\gamma_{i_1, n_1}' > \gamma_{i_2, n_2}'$. Let $\Lambda_i$ denote the $i$th smallest element of $\Lambda$. One can easily verify (see \cite{HW}) that
\begin{gather}\label{eqn_gamma_order}
\cdots\ <\ \gamma_{1, n}\ <\ \gamma_{2, n} \ <\ \cdots\ <\ \gamma_{A-1, n} \ <\ \gamma_{1, n+1}\ <\ \cdots.
\end{gather}

Suppose $m \in \mathbb{N}$. We may obtain a representation of $m$ by first finding the largest index $i_1$ such that $\Lambda_{i_1}' \leq m$ but $\Lambda_{i_1 + 1}' > m$. We then repeat this process on $m - \Lambda_{i_1}'$ to obtain $\Lambda_{i_2}$, etc. This process terminates after finitely many steps. Let $k$ denote the number of steps involved.

\begin{proposition} [See Theorem 1.1 of \cite{HW}] \label{prop_greedy}
Let $m \in \mathbb{N}$. Then the representation $\Lambda_{i_1} + \Lambda_{i_2} + \dots + \Lambda_{i_k}$ is the gzd of $m$. In particular, the supports of $\Lambda_{i_j}$ and $\Lambda_{i_\ell}$ do not intersect unless $j = \ell$.
\end{proposition}

In the sequel, ``the greedy algorithm'' will always refer to the greedy algorithm of Proposition \ref{prop_greedy}.

Let $\Gamma$ denote the set of all $\gamma_{i, n}$. We can place a total ordering on the elements in $\Gamma$ using Equation \eqref{eqn_gamma_order}. Let $\Gamma_0$ (arbitrarily) be defined as $\gamma_{1, 0}$ and let $\Gamma_n$ with $n \in \mathbb{Z}$ denote the element in $\Gamma_n$ which is $n$ terms larger than $\Gamma_0$.

\section{Convergence of gzds of a derived sequence} \label{section_convergence}

Given a PLRS $H$, what is the relationship between the gzds of elements in $\textnormal{Der}(v, H)$ where $v$ is some finite string? Briefly, Theorem \ref{thm_converge} says that this sequence of gzds converges in an exact sense with one pseudo-exception. We now define exactly what we mean by convergence.

\begin{definition}
Let $D = \textnormal{Der}(v, H)$. We define the \textit{normalized gzd} of $D_n$ as 
$$\textnormal{NGZD}(D_n)\ :=\ S^{-n}(\textnormal{GZD}(D_n)).$$
(Note that the normalized gzd takes as input not just an integer, but an integer and the index at which that integer occurs in some sequence.) We say that $D$ \textit{converges to $\alpha$ in gzd} if $\textnormal{NGZD}(D_n)$ converges to $\alpha$ pointwise (when viewed as maps $\mathbb{Z} \to \mathbb{Z}$ with the discrete topology). 
\end{definition} 
We extend the term \emph{allowable representation} to mean any extended representation of the form $\Gamma_{i_1} + \Gamma_{i_2} + \cdots$ such that $\textnormal{supp} \ \Gamma_{i_j} \cap \ \textnormal{supp} \ \Gamma_{i_k} = \emptyset$ unless $j = k$. We shall in general assume that allowable representations have finitely many terms (those with infinitely many terms will be called infinite allowable representations). Any time we write an allowable representation as $\Theta_1 + \Theta_2 + \cdots$ with $\Theta_j = \Gamma_{i_j}$ for some $i_j$, we shall assume that $\mathcal{L}(\Theta_k) > \mathcal{R}(\Theta_\ell)$ whenever $k > \ell$.

\begin{example}
Let $H$ be the Fibonacci sequence, and let $v = (2)$. Since $2 H_n = H_{n+1} + H_{n-2}$, we have that $\textnormal{NGZD}(D_n) = 10.01$ for all $n \geq 2$. Thus clearly $D$ converges to $10.01$ in gzd.
\end{example}

We now state our main theorem regarding convergence of gzds.

\begin{theorem}\label{thm_converge}
Suppose $H$ is a PLRS with signature $(c_1, \dots, c_t)$ and minimal polynomial $f$. Let $v$ be a finite string with $\mathcal{P}_v(\beta) > 0$. Let $D = \textnormal{Der}(v, H)$. Exactly one of the following happens:
\begin{enumerate}
\item There exists an allowable representation $\alpha$ such that $\mathcal{P}_{[v]-\alpha}$ is a multiple of $f$. Then $D$ converges to $\alpha$ in gzd.
\item There exists an allowable representation $\alpha = S^k([a_1, a_2, \dots, a_\ell])$ with $a_\ell \neq 0$ such that $\mathcal{P}_{[v]-\alpha}(\beta) = 0$, but $\mathcal{P}_{[v]-\alpha}$ is not a multiple of $f$. Let $K = \textnormal{Der}([v]-\alpha, H)$. Define 
\begin{gather*}
D_+\ =\ \{D_n : K_n \geq 0\}, \\
D_- \ =\ \{D_n : K_n < 0 \}.
\end{gather*}
Then the sets $D_+$ and $D_-$ are both infinite, $D_+$ converges to $\alpha$ in gzd, and $D_-$ converges to $S^k([a_1, a_2, \dots, a_{\ell-1}, (a_\ell-1), (c_1, c_2, \dots, (c_t - 1))^\omega])$ in gzd.

\item There does not exist an allowable representation $\alpha$ such that $P_{[v]-\alpha}(\beta) = 0$. Then $D$ converges in gzd to an infinite allowable representation.
\end{enumerate}
\end{theorem}

Given a signature $\sigma$ (or a PLRS $H_\sigma$) and a finite string $v$ with $\mathcal{P}_v(\beta) > 0$, we may refer to ``being in'' Case 1, 2, or 3 of Theorem \ref{thm_converge}, by which we mean that the hypotheses and conclusions of the respective case apply.

Before proving Theorem \ref{thm_converge}, we give examples of each of the three cases.

\begin{example}
Suppose $H$ has signature $(4, 2, 1)$. Let $v = (5, 1, 1, 3)$ and $D = \textnormal{Der}(v, H)$. Then $D$ converges to $\gamma = 10.324$ in gzd. Notice that
\begin{equation*}
\begin{split}
P_{[v]-\gamma} &\ =\ (5x^3 + x^2 + x + 3) - (x^4 + 3 x^2 + 2 x + 4) \\
    & \ =\ -x^4 + 5 x^3 - 2 x^2 - x - 1 \\
    &\ =\ (x^3 - 4 x^2 - 2 x - 1)(-x+1).
\end{split}
\end{equation*}
Thus $\textnormal{P}_{[v]-\alpha}$ is a multiple of the minimal polynomial. Properly interpreted, Theorem \ref{thm_fs} of Frougny and Solomyak \cite{FS} implies that if the signature is weakly decreasing, then given any finite string $v$ with $P_v(\beta) > 0$, we are always in Case 1 of Theorem \ref{thm_converge}. The special case when $v = (k)$ is the content of Theorem \ref{thm_grabner} of \cite{GTNP}.
\end{example}

\begin{example}
We present an example of Case 2, which is the most subtle of the cases. The following example is adapted from Example 1 of \cite{FS}. Let $\sigma = (2, 1, 0, 2)$. Let $v = (3, 0, 2)$. Below is a table showing the normalized gzd of some elements of $D$.
\begin{center}
\begin{tabular}{| l | l |}
\hline
$n$ & \textnormal{Normalized gzd of $D_n$} \\
\hline
$6$ & $10.2001$ \\
$7$ & $10.12101$ \\
$8$ & $10.200001$ \\
$9$ & $10.121012$ \\
$10$ & $10.20000001$ \\
$11$ & $10.121012101$ \\
\hline
\end{tabular}
\end{center}

The even index elements in $D$ converge in gzd to $10.2$, and the odd index elements converge in gzd to $10.1(2101)^\omega$. Note that $10.1(2101)^\omega$ is the allowable representation which is ``just less than'' $10.2$ (entirely analogously to how in the usual base-10 system, $0.9^\omega$ is the representation ``just less than'' $1.0$). The minimal polynomial of $H$ is $x^4 - 2 x^3 - x^2 - 2$, which factors over $\mathbb{Z}[x]$ as $(x^3 - 3x^2 + 2x - 2)(x+1)$. Notice that $\beta$ is a root of $x^3 - 3x^2 + 2x - 2$. Let $\alpha = S^1([1, 0, 2]) = 10.2$. Then $P_{[v]-\alpha} = -x^3 + 3x^2 - 2x + 2$, which has $\beta$ as a root but is not a multiple of the minimal polynomial of $H$.
\end{example}

\begin{example}
The following example is adapted from Example 2 of \cite{FS}. Let $\sigma = (2, 0, 1, 1)$. Let $v = [3]$. No $\alpha$ exists such that $P_{[v]-\alpha}(\beta) = 0$. Furthermore, $D$ converges in gzd to $10.111(00021)^\omega$.

\end{example}
In preparation for the proof of Theorem \ref{thm_converge}, we prove several simple ancillary lemmas.

\begin{lemma}\label{lemma_minus_adjacent}
For all $n \in \mathbb{Z}$, we have $\mathcal{Q}_{\Gamma_{n+1}-\Gamma_n}(\beta) = \beta^{\mathcal{R}(\Gamma_n)}$.
\end{lemma}
\begin{proof}
If $\Gamma_{n+1} \neq S^s([1])$ for some $s$, then the result is obvious. If $\Gamma_{n+1} = S^s([1])$, then the result is also obvious since $\beta^t - (c_1 \beta^{t-1} + \dots + (c_t-1)) = 1$. 
\end{proof}

\begin{lemma}\label{lemma_first_allowable}
Assume $n > m$.
\begin{enumerate}
\item Then $\mathcal{Q}_{\Gamma_n} (\beta) > \mathcal{Q}_{\Gamma_m} (\beta)$. 
\item We have $(S^k(\Gamma_n))' > (S^k(\Gamma_m))'$ for all $k$ for which both sides of the inequality are defined.
\item If $\mathcal{P}_v (\beta) > 0$, then there exists a unique $k \in \mathbb{Z}$ such that for all $\ell \leq k$, we have $\mathcal{P}_{[v] - \Gamma_{\ell}}(\beta) \geq 0$, and for all $\ell > k$, we have $\mathcal{P}_{[v] - \Gamma_{\ell}}(\beta) < 0$.
\end{enumerate}
\end{lemma}

\begin{proof}
(1) follows immediately from Lemma \ref{lemma_minus_adjacent}. (2) is simply a restatement of Equation \eqref{eqn_gamma_order}. (3) follows almost immediately from (1). First, note that as $n \to -\infty$, we have $\mathcal{Q}_{\Gamma_n}(\beta) \to 0$, and as $n \to \infty$, we have $\mathcal{Q}_{\Gamma_n}(\beta) \to \infty$. Thus by (1) and the preceding observations, we have that $\mathcal{Q}_{[v]-\Gamma_n}(\beta)$ monotonically goes to $\mathcal{Q}_{[v]}(\beta) > 0$ as $n \to -\infty$ and to $-\infty$ as $n \to \infty$. Therefore, utilizing Remark \ref{remark_p_q}, there must be a unique $k$ such that for all $\ell \leq k$, we have $\mathcal{P}_{[v]-\Gamma_{\ell}}(\beta) \geq 0$, and for all $\ell > k$, we have $\mathcal{P}_{[v]-\Gamma_{\ell}}(\beta) < 0$.
\end{proof}

\begin{remark}\label{remark_chosen_block}
Let $v$ be such that $\mathcal{P}_v(\beta) > 0$. Suppose $\Gamma_k \in \Gamma$. Let $E = \textnormal{Der}([v]-\Gamma_k, H)$ and $F = \textnormal{Der}([v]-\Gamma_{k+1}, H)$. If $E_s \geq 0$ and $F_s < 0$, then since the gzd is the output of the greedy algorithm of Proposition \ref{prop_greedy}, it follows from (2) of Lemma \ref{lemma_first_allowable} that $\Gamma_k$ is the first block of $\textnormal{NGZD}(D_s)$.
\end{remark}

We call $\Gamma_k$ as in (3) of Lemma \ref{lemma_first_allowable} the \textit{first candidate block of $[v]$}. We shall denote the first candidate block by $\Omega_1$. If $\mathcal{P}_{[v]-\Omega_1}(\beta) > 0$, then we may apply Lemma \ref{lemma_first_allowable} to $[v]-\Omega_1$ to obtain some block $\Omega_2$ which we call the second candidate block. We may repeatedly apply Lemma \ref{lemma_first_allowable} to obtain the \textit{$k$th candidate block}, $\Omega_k$, as long as $\mathcal{P}_{[v]-(\Omega_1+\dots+\Omega_{k-1})}(\beta) \neq 0$. If $\mathcal{P}_{[v]-(\Omega_1+\dots+\Omega_k)}(\beta) \neq 0$, we say that the \textit{asymptotic greedy algorithm continues}. If $\mathcal{P}_{[v]-(\Omega_1 + \dots + \Omega_k)}(\beta) = 0$, we call $\Omega_1 + \dots + \Omega_k$ the \textit{output of the asymptotic greedy algorithm for $[v]$}. If the asymptotic greedy algorithm always continues, we instead call $\sum_{j  = 1}^\infty \Omega_j$ the output of the asymptotic greedy algorithm for $[v]$.

\begin{remark}\label{remark_asymptotic}
Suppose $\Omega_1$ is the first candidate block of $[v]$. If $\mathcal{P}_{[v]-\Omega_1}(\beta) \neq 0$, then by Remark \ref{remark_coeff}, there exists an $N$ such that for $s > N$, $E_s$ is always positive and $F_s$ is always negative. Then by Remark \ref{remark_chosen_block}, $\Omega_1$ is the first block of $\textnormal{NGZD}(D_s)$. Repeated application of this argument shows that if $\Omega_1 + \dots + \Omega_k$ is the output of the asymptotic greedy algorithm, then $\Omega_1 + \dots + \Omega_{k-1}$ are the first $k-1$ blocks of $\textnormal{NGZD}(D_s)$ for $s$ sufficiently large.
\end{remark}

\begin{lemma}\label{lemma_repeating}
Let $\beta^t = c_1 \beta^{t-1} + c_2 \beta^{t-2} + \dots + c_t$. Let $\gamma = 0.(c_1 c_2 \cdots (c_t-1))^\omega$. Then $Q_\gamma(\beta) = 1$. In particular, if $\delta = S^k([1,(-c_1,-c_2\dots,-c_t+1)^n])$ for any finite $n$ and $k$, then $Q_\delta (\beta) > 0$.
\end{lemma}
\begin{proof}
We have $\beta^t - 1 = c_1 \beta^{t-1} + \dots + (c_t-1)$. Recall from Proposition \ref{prop_pos_type} that $\beta > 1$. Therefore,
\begin{equation*}
\begin{split}
Q_\gamma (\beta) &\ =\  (c_1\beta^{t-1} + \dots + (c_t-1)) (\beta^{-t}+\beta^{-2t} + \cdots) \\
    &\ =\  (\beta^t-1) \frac{\beta^{-t}}{1-\beta^{-t}}  \\
    & \ =\ 1.
\end{split}
\end{equation*}
\end{proof}

\begin{lemma}\label{lemma_one_biggest}
Suppose $\gamma$ is an allowable representation with support in $\mathbb{Z}\setminus\mathbb{N}_0$. Then $Q_{\gamma}(\beta) < 1$.
\end{lemma}
\begin{proof}
Suppose $\varepsilon = 1.0$. We shall modify $\varepsilon$ using borrows to end up with an extended representation $\delta$ such that $Q_\delta(\beta) = 1$ and $Q_\delta(i) \geq Q_\gamma(i)$ for all $i$, with strict inequality for at least one $i$. Suppose $\gamma = \Theta_1+\dots+\Theta_k$. Since $c_1 \geq 1$, by borrowing once from each of the places between $0$ and $\mathcal{L}(\Theta_1)+1$, we end up with an extended representation, $\eta$, with $\eta(\mathcal{L}(\Theta_1)+1) \geq 1$. Then, by borrowing from $\mathcal{L}(\Theta_1)+1$, we end up with an extended representation, $\nu$, which pointwise dominates $\Theta_1$ with strict dominance in at least one index (e.g., if $\Theta_1 = S^k([c_1, \dots, c_{r-1}, \ell])$ with $\ell < c_{r}$, then $\nu(k-r) > \ell = \Theta_1(k-r)$). Inductively, we can carry out the above procedure to get an extended representation which pointwise dominates $\gamma$ and has strict inequality in at least one index.
\end{proof}

\begin{corollary}\label{cor_greedy}
Suppose $\alpha = \Theta_1+\dots+\Theta_k$ is an allowable representation with $\Theta_1 = \Gamma_n$. Then $\mathcal{Q}_{\Gamma_{n+1}-\alpha}(\beta) > 0$.
\end{corollary}

\begin{proof}
By Lemmas \ref{lemma_minus_adjacent} and \ref{lemma_one_biggest},
\begin{equation*}
\begin{split}
\mathcal{Q}_{\Gamma_{n+1} - \alpha}(\beta) &\ =\  \mathcal{Q}_{(\Gamma_{n+1}-\Theta_1)-(\Theta_2+\dots+\Theta_k)}(\beta) \\
    &\ =\  \mathcal{Q}_{\Gamma_{n+1}-\Gamma_n}(\beta) - \mathcal{Q}_{\Theta_2+\dots+\Theta_k}(\beta) \\
    &\ > \ \beta^{\mathcal{R}(\Theta_1)} - \beta^{\mathcal{R}(\Theta_1)} \\
    &\ =\ 0.
\end{split}
\end{equation*}
\end{proof}

\begin{lemma}\label{lemma_greedy_true}
Suppose there exists an allowable sequence $\alpha = \Theta_1 + \dots + \Theta_k$ such that $\mathcal{P}_{[v]-\alpha}(\beta) = 0$. Then $\alpha$ is the output of the asymptotic greedy algorithm for $[v]$.
\end{lemma}
\begin{proof}
We first show that $\Theta_1$ is the first candidate block for $[v]$. Suppose $\Theta_1 = \Gamma_n$. Suppose $\Gamma_j$ is the true first candidate block for $[v]$. Since $\mathcal{P}_{[v]-\Gamma_{n}}(\beta) > \mathcal{P}_{[v]-\alpha}(\beta) = 0$ (assuming $k > 1$), we know that $j \geq n$. However, by Corollary \ref{cor_greedy}, we know that $\mathcal{Q}_{[v]-\Gamma_{n+1}}(\beta) < \mathcal{Q}_{[v]-\alpha}(\beta) = 0$. Thus $j \leq n$, so $j = n$. Since $\mathcal{P}_{[v]-\Gamma_{n}}(\beta) > 0$, the asymptotic greedy algorithm continues.

We may inductively apply the above argument, at the $\ell$th step replacing $[v]$ with $[v]-(\Theta_1+\dots+\Theta_{\ell-1})$ until $\ell = k$. This shows that $\alpha$ is the output of the asymptotic greedy algorithm.
\end{proof}

\begin{proof}[Proof of Theorem \ref{thm_converge}]
Suppose there exists an allowable representation $\alpha$ such that $\mathcal{P}_{[v]-\alpha}(\beta) = 0$. By Lemma \ref{lemma_greedy_true} we know that $\alpha$ is the output of the asymptotic greedy algorithm for $[v]$, that is $\alpha = \Omega_1 + \dots + \Omega_k$.

Suppose $\mathcal{P}_{[v]-\alpha}$ is a multiple of $f$. Then by Remark \ref{remark_derived_zero}, $\textnormal{Der}([v]-\alpha, H)$ is identically zero. Thus $\alpha$ must be the normalized gzd for all $D_s \in D$. Thus clearly $D$ converges to $\alpha$ in gzd. This handles Case 1.

Suppose $\mathcal{P}_{[v]-\alpha}$ is not a multiple of $f$. Let $K = \textnormal{Der}([v]-\alpha, H)$. Then $K$ is nonzero and satisfies the recurrence relation $f/(x-\beta)$ by Remark \ref{remark_coeff}. By Corollary \ref{cor_no_pos}, $f/(x-\beta)$ has no positive root, so by Theorem \ref{thm_oscillate}, $K$ is positive and negative infinitely often. Thus the sets $D_+$ and $D_-$ as defined in the statement of Theorem \ref{thm_converge} are infinite.

By Remark \ref{remark_asymptotic}, we know that for $s$ sufficiently large, the first $k-1$ blocks of the normalized gzd of $D_s$ are $\delta = \Omega_{1}+\dots+\Omega_{k-1}$. Suppose $\Omega_k = \Gamma_n$. 

First suppose $D_s \in D_+$. Since $K_s \geq 0$ and $\mathcal{P}_{[v]-(\Omega_1+\dots+\Omega_{k-1}+\Gamma_{n+1})}(\beta) < 0$, by Remark \ref{remark_asymptotic}, for $s$ sufficiently large, $\Gamma_{n}$ is the $k$th block of $\textnormal{NGZD}(D_s)$. Thus $\textnormal{NGZD}(D_s) = \alpha + \varepsilon_s$ for $s$ sufficiently large and $D_s \in D_+$. We claim that $\mathcal{L}(\varepsilon_s) \to -\infty$ as $s \to \infty$. Since $\mathcal{Q}_{[v]-\alpha-\Gamma_m}(\beta) < 0$ for any $m$, for $s$ sufficiently large $\Gamma_m$ is never the $(k+1)$th block selected by the greedy algorithm. Thus the claim is proved, so $D_+$ converges to $\alpha$ in gzd.

Now suppose $D_s \in D_-$. We first claim that $\Gamma_{n-1}$ is the $k$th block chosen by the greedy algorithm for all $s$ sufficiently large. This follows from the fact that $K_s < 0$ but $Q_{[v]-(\delta + \Gamma_{n-1})}(\beta) > 0$, and Remark \ref{remark_asymptotic}.

Let $\mathcal{R}': = \mathcal{R}(\delta+\Gamma_{n-1})$. We claim that the next block chosen is $\eta = S^{\mathcal{R}'-1}([c_1, \dots, c_t-1])$ for $s$ sufficiently large. By Proposition \ref{prop_greedy}, the largest possible block next chosen by the greedy algorithm is $\eta$ (since chosen blocks have non-intersecting supports). Thus if $\mathcal{Q}_{[v]-\delta-\Gamma_{n-1}-\eta}(\beta) > 0$, $\eta$ must always be chosen by the greedy algorithm for $s$ sufficiently large. Using Lemmas \ref{lemma_minus_adjacent} and \ref{lemma_repeating}, we have
\begin{equation*}
\begin{split}
\mathcal{Q}_{[v]-\delta-\Gamma_{n-1}-\eta}(\beta) &\ =\   \mathcal{Q}_{[v]-\delta-\Gamma_{n}}(\beta) + \mathcal{Q}_{\Gamma_{n}-\Gamma_{n-1}}(\beta) + \mathcal{Q}_{-\eta}(\beta) \\
    &\ >\  \beta^{\mathcal{R}(\Gamma_{n-1})} - \beta^{\mathcal{R}(\Gamma_{n-1})} \\
    &\ =\ 0.
\end{split}
\end{equation*}
At the $b$th step, the largest possible next block chosen is $\eta^b := S^{-bt}(\eta)$. However, by Lemma \ref{lemma_repeating}, $\mathcal{Q}_{[v]-\delta-\Gamma_{n-1}-\eta-\eta^1-\dots-\eta^b}(\beta) > 0$. This handles Case 2.

Lastly, Case 3 follows immediately from earlier discussion.
\end{proof}

\begin{remark}\label{remark_use_summand}
The utility of Theorem \ref{thm_converge} to the summand minimality question is as follows. Given $H_\sigma$, suppose we can find some string $v$ with non-negative entries such that we are not in Case 1 of Theorem \ref{thm_converge}. Then since $S^k([v])$ has a fixed number of summands, but in Cases 2 and 3 of Theorem \ref{thm_converge} the number of summands in the gzd of $S^k([v])$ grows arbitrarily large, $H_\sigma$ must not be summand minimal. As we shall see, in most cases, taking $v = (c_1+1)$ (where $(c_1, c_2, \dots, c_t)$ is the signature of $H$) suffices.
\end{remark}
\section{Summand minimality implies weakly decreasing signature} \label{section_summand_minimality}

Interestingly, if we have a PLRS whose signature does not satisfy the Parry condition and $c_1 \neq 1$, then we can find a simple example of a non-gzd representation with fewer summands than the gzd.

\begin{proposition} \label{prop_not_parry}
Suppose $H$ is a positive linear recurrence sequence whose signature does not satisfy the Parry condition and $c_1 \geq 2$. Then $H$ is not summand minimal.
\end{proposition}

\begin{proof}
Suppose the signature is $(c_1, \dots, c_t)$. Suppose $i$ is the smallest index such that 
$$(c_{i+1}, \dots, c_t, 0, \dots, 0)\ \geq_{\mathrm{lex}}\ (c_1, \dots, c_t)$$ 
(see Equation \eqref{eqn_parry} of Definition \ref{defn_parry}). Then there exists some $j \geq 1$ such that $c_{i+k} = c_{k}$ for $k < j$ and $c_{i+j} > c_j$. Consider the representation $(c_1, \dots, c_i+1, \underbrace{0, \dots, 0}_{j-1}, \infty_{t-1})$. This is clearly not an allowable sequence. By borrowing from the $j$th place (whose entry is $c_{i}+1$), we get 
$$(c_1, \dots, c_i, c_1, c_2, \dots, c_j, \infty_{t-1})\ =\ (c_1, \dots, c_i, c_{i+1}, c_{i+2}, \dots, c_{i+j-1}, c_j, \infty_{t-1}).$$ Since $c_j < c_{i+j}$, we get that the above representation is the gzd. The change in the number of summands is $c_1 + \dots + c_j - 1 \geq 1$ since $c_1 \geq 2$.
\end{proof}

\begin{example}
Let $\sigma = (5, 4, 2, 1, 5, 4, 3, 7)$. Then the representation $(5, 4, 2, 2, 0, 0, 0, \infty)$ has as its gzd $(5, 4, 2, 1, 5, 4, 2, \infty)$, which has more summands.
\end{example}

\begin{lemma}\label{lemma_parry}
Suppose $\sigma$ satisfies the Parry condition, the length of $\sigma$ is at least two, and $c_1 \geq 2$. Let $D = \textnormal{Der}((c_1+1), H)$. Then there exists an $N$ such that for $s > N$, we have $\textnormal{NGZD}(D_s) = S^1([1]) + \varepsilon_s$ with $\mathcal{L}(\varepsilon_s) < 0$.
\end{lemma}

\begin{proof}
We first prove the following inequalities and then explain how they imply the result:
\begin{eqnarray}
\beta & \ <\  &c_1 + 1, \label{eqn_beta_less} \\
2 \beta& \ >\ & c_1 + 1, \label{eqn_2_beta}\\
\beta & \ >\ & c_1. \label{eqn_beta_more}
\end{eqnarray}
Let $f$ be the minimal polynomial. Then
$$f(c_1) = c_1^{t} - c_1 c_1^{t-1} - c_2 c_1^{t-2} - \dots - c_t = - c_2 c_1^{t-2} - \dots - c_t < 0$$
since $\sigma$ has length at least two. Since $a \geq \beta \implies f(a) \geq 0$, we conclude that $c_1 < \beta$ (Equation \eqref{eqn_beta_more}).

Since $\sigma$ satisfies the Parry condition, $c_1 \geq c_t \geq 0$ for all $t$. Thus, if $|z| \leq c_1+1$,
\begin{equation*}
\begin{split}
|c_1 (z)^{t-1} + c_2 (z)^{t-1} + \cdots + c_t| &\ \leq\  c_1 (1 + (c_1+1) + \dots + (c_1+1)^{t-1}) \\
    & \ = \ (c_1+1)^t - 1.
\end{split}
\end{equation*}
Therefore we have $|z^t| > |c_1 z^{t-1} + c_2 z^{t-2} + \dots + c_t|$ for $|z| = c_1+1$; by Rouch{\'e}'s theorem, the polynomials $z^t$ and $f(z)$ have the same number of zeros in $|z| \leq c_1+1$, namely $t$ zeros. Thus $\beta < c_1 + 1$ (Equation \eqref{eqn_beta_less}).

Lastly, we have $\beta > c_1$ so $2 \beta > 2 c_1$ and $2 c_1 \geq c_1 + 1$ if $c_1 \geq 1$ (Equation \eqref{eqn_2_beta}).

By Equation \eqref{eqn_beta_less}, we get that $\mathcal{P}_{[c_1+1]-S^1([1])}(\beta) > 0$. Since $c_1 \geq 2$, we have that $(1)$ is an allowable block. Let $\Gamma_n = S^1([1])$. Note that $\Gamma_{n+1}$ must start with a $2$ (again since $c_1 \geq 2$). Then $\mathcal{P}_{[c_1+1]-\Gamma_{n+1}}(\beta) < \mathcal{P}_{[c_1+1]-S^1([2])}(\beta) < 0$ by Equation \eqref{eqn_2_beta}. Therefore $S^1([1])$ is the first candidate block for $[c_1+1]$ and the asymptotic greedy algorithm continues. Since $\mathcal{P}_{[c_1+1]-S^1([1])-[1]}(\beta) = -\beta + c_1 < 0$ by Equation \eqref{eqn_beta_more}, we know that the second candidate block must be less than $[1]$. In particular, the second block of $\textnormal{NGZD}(D_s)$ must be less than $[1]$ for $s$ sufficiently large, which is to say that $\mathcal{L}(\varepsilon_s) < 0$ as in the statement of the lemma.
\end{proof}

Note that Lemma \ref{lemma_parry} is not necessarily true if $c_1 = 1$. For example, if $\sigma = (1, 0, 0, 1)$, then $\textnormal{NGZD}(D_s) = S^2([1]) + \varepsilon_s$ for $s$ sufficiently large.

\begin{theorem} \label{thm_parry}
Suppose $\sigma$ satisfies the Parry condition and $c_1 \neq 1$ and $\sigma$ is not weakly decreasing. Then $H = H_\sigma$ is not summand minimal.
\end{theorem}

\begin{proof}
Let $D = \textnormal{Der}(c_1+1, H)$. We shall show that there exists an $s$ such that the number of summands in $D_s$ is greater than $c_1 + 1$. By Remark \ref{remark_use_summand}, if we are in Case 2 or 3 of Theorem \ref{thm_converge}, then we are done. Suppose instead that we are in Case 1 of Theorem \ref{thm_converge}.  We proceed by contradiction.

If we are in Case 1, then there exists an $\alpha$ such that $g := -\mathcal{P}_{[v]-\alpha}$ is a multiple of $f$, the minimal polynomial of $H$. By Lemma \ref{lemma_parry}, 
$$g\ = \ x^{r+1} - (c_1+1) x^{r} + b_1 x^{r-1} + \dots + b_{r}$$
for some $b_i \geq 0$. The number of summands in $\alpha$ is $\mathcal{S} := 1 + \sum b_i$. If $\mathcal{S} > c_1 + 1$, then $H_\sigma$ is not summand minimal, so we may assume $\mathcal{S} \leq c_1 + 1$.

We know that $g = f h$ for some $h$. Since the leading coefficient of $f$ is 1, by long division of polynomials we conclude that $h$ must have integer coefficients (this is essentially Gauss' lemma). Let $\Sigma (p)$ represent the sum of coefficients of a polynomial $p$. Notice that $\Sigma (f h) = \Sigma (f) \Sigma (h)$. Note that $\Sigma(f h) = \Sigma(g) = \mathcal{S}-(c_1+1) \leq 0$. We know $\mathcal{S} \geq 1$, with equality if and only if $b_i = 0$ for all $i$. However, if $b_i = 0$ for all $i$, then $g = x - c_1$ and $\sigma = (c_1)$, contradicting the fact that $\sigma$ is not weakly decreasing. Thus $\mathcal{S} \geq 2$.

Since $\mathcal{S} \geq 2$, we have $|\Sigma(g)| \leq c_1 - 1$.  We know that $|\Sigma(f)| = |1-c_1-\dots-c_t| \geq c_1$ since $t \geq 2$ and $c_t \geq 1$. Since $|\Sigma(g)|$ is an integer multiple of $|\Sigma(f)|$, but $|\Sigma(g)| \leq c_1 - 1 < c_1 \leq |\Sigma(f)|$, we must have that $\Sigma(g) = 0$, which is to say that $\mathcal{S} = c_1+1$ and $x = 1$ is a root of $g$.

When we factor out $x-1$ from $g$, we are left with
\begin{equation*}
g = (x-1)(x^{r} - c_1 x^{r-1} - (c_1 - b_1) x^{r-2} - (c_1 - b_1 - b_2) x^{r-3} - \dots - (c_1 - b_1 - \dots - b_{r-1})).
\end{equation*}
Let $q = g/(x-1)$. Notice that $c_1 \geq c_1 - b_1 \geq \dots \geq c_1 - b_1 - \dots - b_{r-1} > 0$. Therefore by a theorem of Brauer (Theorem 2 of \cite{Br}), $q$ is irreducible in $\mathbb{Q}[x]$. Since $g$ has a factorization over $\mathbb{Q}[x]$ as $g = f h$, we must have that $f = q$ and $h = (x-1)$. However, this contradicts the fact that the signature was assumed to not be weakly decreasing. Thus either we are in Case 1 of Theorem \ref{thm_converge} but $H_\sigma$ is not summand minimal, or else we are not in Case 1 of Theorem \ref{thm_converge}, in which case $H_\sigma$ is again not summand minimal from previous discussion.
\end{proof}

Lastly, we need to check the case when $c_1 = 1$. We again examine the gzd of elements in $D = \textnormal{Der}(2, H)$ (note that $c_1 + 1 = 2$). For a given $H = H_\sigma$, if we are in Cases 2 or 3 of Theorem \ref{thm_converge}, then $H$ is not summand minimal. Thus we assume we are in Case 1. Suppose there exists an $\alpha$ such that $g = \mathcal{P}_{[2]-\alpha}$ is a multiple of $f$. If $H$ is summand minimal, then $0 \leq \Sigma(g) \leq 2$. Clearly $\Sigma(g) \neq 2$ (since then the gzd of $D_s$ would have no nonzero terms for all $s$). Suppose $\Sigma(g) = 1$. Then $\alpha = S^r([1])$ for some $r$. We cannot have $r \leq 0$ since the terms of $H$ are monotonically increasing. Thus in such a case $g = x^r - 2$. Then $g$ has all roots of the same modulus, so $g$ cannot be a multiple of $f$ (which has one root of strictly greater modulus than all the rest), unless $r = 1$, in which case the signature is $(2)$ contradicting the fact that $c_1 = 1$. Thus $\Sigma(g) = 0$. We clearly can't have $\alpha = S^r([2])$ since $[2]$ is not an allowable block. Thus, $\alpha = S^a([1]) + S^b([1])$ for some $a$ and $b$, so $2 H_n = H_{n+a} + H_{n+b}$. We cannot have one of $a$ or $b$ (or both) equal to 0 since then $H_n = H_{n+a}$. We can't have both $a < 0$ and $b < 0$ because then $H_n > H_{n+a}$ and $H_n > H_{n+b}$, so $2 H_n > H_{n+a} + H_{n+b}$. Similarly, we cannot have both $a > 0$ and $b > 0$. Thus one is greater than zero and one is less. Thus $g = x^r - 2 x^s + 1$ with $r > s$. We utilize the following theorem of Schinzel \cite{Schi} which appears in English in \cite{FS2}.

\begin{theorem}[Theorem 1 of \cite{Schi}] \label{thm_irr}
Let $g(r,s)=x^r-2x^s+1$ with $r, s \in \mathbb{N}$ and $r>s$. Let $d = \gcd(r, s)$. The polynomial
\begin{gather*}
h(r, s)\ =\ \frac{g(r, s)}{x^d - 1} \ = \ x^{r-d} + x^{r-2d} + \dots + x^{r} - x^{r-d} - x^{r-2d} - \dots - 1
\end{gather*}
is irreducible for all $r$, $s$ except for $(r, s) = (7k, 2k)$ or $(7k, 5k)$, in which case $h(r, s)$ factors into irreducible pieces
\begin{gather*}
h(r, s)\ =\ (x^{3k} + x^{2k}-1)(x^{3k}+x^k+1) \textnormal{ \ \ and \ \ } (x^{3k}+x^{2k}+1)(x^{3k}-x^k-1),
\end{gather*}
respectively.
\end{theorem}

\begin{proposition}\label{prop_big}
Suppose $f$ is of positive type. Then $f \nmid g(r, s)$ for any $r$, $s$ except for $r = s+1$, in which case $f = x^{r-1}-x^{r-2} - \dots - 1$.
\end{proposition}

\begin{proof}
For now assume $(r, s) \neq (7k, 2k)$ of $(7k, 5k)$. Suppose $f \ | \ g(r, s)$. We know that $f$ cannot be a product of cyclotomic polynomials since $\beta > 1$ by Proposition \ref{prop_pos_type}. Therefore, $h(r, s) \ | \ f$. 

If $d = 1$, then $g(r, s) = h(r, s) (x-1)$. We know that $x-1 \nmid f$ because $f$ has only one positive root, $\beta$. Thus if $f \ |  \ g(r, s)$, then $f = h(r, s)$ which is impossible unless $r = s+1$.

Suppose $d > 1$. Suppose $f = h(r, s) \cdot p$ with $p \ | \ x^d-1$ and $x-1 \nmid p$. Then $p$ must have symmetric coefficients (products of cyclotomic polynomials other than $x-1$ have symmetric coefficients). Suppose $p = x^\ell + a_1 x^{\ell-1} + \cdots + a_1 x + 1$. Then
$$ f(x)\ =\ (x^{r-d} + x^{r-2d} + \cdots + x^r - x^{r-d} - \cdots - 1)(x^\ell + a_1 x^{\ell-1} + \cdots + a_1 x + 1).$$
The coefficient of $x^{r-d+\ell-1}$ in $f$ is $a_1$ since the $x^{r-d-1}$ coefficient of $h(r, s)$ is 0, and the coefficient of $x$ in $f$ is $-a_1$ since the $x$ coefficient of $h(r, s)$ is 0. Since $f$ is of positive type and of degree $x^{r-d}$, the $x^{r-d-1}$ coefficient must be negative implying that $a_1 < 0$. Then $-a_1 > 0$, implying that the $x$ coefficient of $f$ is positive, contradicting the fact that $f$ is of positive type. Thus in fact $f \nmid g(r, s)$.

Lastly, we must handle the case when $(r, s) = (7k, 2k)$ or $(7k, 5k)$. Suppose $(r, s) = (7k, 2k)$. Let $I_1 = x^{3k} + x^{2k} - 1$ and $I_2 = x^{3k}+x^k+1$. The previous argument still implies that $f \neq h(r, s) \cdot p = I_1 \cdot I_2 \cdot p$. Thus, if $f \ | \ g(r, s)$, then either $f = I_1 \cdot p$ or $f = I_2 \cdot p$. Suppose $f = I_1 \cdot p$. Notice that $I_1 = g(4k, k)/(x^k -1) = h(4k, k)$, so the previous argument again applies. Suppose instead that $f = I_2 \cdot p$. Neither $I_2$ nor $p$ has positive roots, so $f \neq I_2 \cdot p$. The argument for $(r, s) = (7k, 2k)$ is virtually identical.
\end{proof}

\begin{corollary} \label{cor_c_1}
Suppose $\sigma$ has $c_1 = 1$ and is not weakly decreasing. Then $H = H_\sigma$ is not summand minimal.
\end{corollary}

\begin{proof}
Let $f$ be the minimal polynomial. Let $v = (2)$. By the previous discussion, if we are in Case 1 of Theorem \ref{thm_converge} and $H$ is summand minimal, then there exists some polynomial $g(r, s)$ which is a multiple of $f$. However, by Proposition \ref{prop_big}, this is impossible. Therefore either we are in Case 1 but $H$ is not summand minimal, or else we are in Cases 2 or 3 of Theorem \ref{thm_converge}, so the number of summands in the gzd of elements of $\textnormal{Der}(2, H)$ grows arbitrarily large, and in particular larger than 2. Therefore in all cases $H$ is not summand minimal.
\end{proof}

Combining Proposition \ref{prop_not_parry}, Theorem \ref{thm_parry}, and Corollary \ref{cor_c_1}, we conclude that if $\sigma$ is not weakly decreasing, then $H_\sigma$ is not summand minimal.
\section{Concluding remarks} \label{section_conclusion}
Suppose $\beta$ is the dominating root of the polynomial of positive type $x^t - c_1 x^{t-1} - \dots - c_1$ and let $\sigma = (c_1, \dots, c_t)$. If $\sigma$ satisfies the Parry condition, we say that $\beta$ is a \textit{simple Parry number} (in the $\beta$-expansion literature, $\sigma$ is sometimes denoted by $d(1, \beta)$). Extrapolating from our previous definition, we say that $\beta$ is \textit{summand minimal} if for all $x \in \mathbb{Z}_+[\beta^{-1}]$, the number of summands in the $\beta$-expansion of $x$ is less than or equal to the number of summands in any other representation of $x$ as a sum of positive integral multiples of powers of $\beta$. We note that the techniques of this paper immediately demonstrate the following.
\begin{proposition}
Suppose $\beta$ is a simple Parry number whose corresponding polynomial is irreducible in $\mathbb{Q}[x]$. Then $\beta$ is summand minimal if and only if $\sigma$ is weakly decreasing.
\end{proposition}

One may then ask
\begin{question}
Are there simple Parry numbers $\beta$ so that $\beta$ is summand minimal but $d(1, \beta)$ is not weakly decreasing?
\end{question}
\noindent We conjecture that the answer to this question is no.

It is clear that summand minimality implies that
\begin{equation*} \label{eqn_property_f_prime}
\textnormal{Fin}(\beta)\ =\ \mathbb{Z}_+[\beta^{-1}].
\end{equation*}
One may also wonder about the related issue of whether
\begin{equation} \label{eqn_property_f}
\textnormal{Fin}(\beta)\ =\ \mathbb{Z}[\beta^{-1}]_+.
\end{equation}
If $\beta$ is a simple Parry number, then $\mathbb{Z}_+[\beta^{-1}] = \mathbb{Z}[\beta^{-1}]_+$ (see \cite{FS}). Equation \eqref{eqn_property_f} is known in the literature as property (F). Frougny and Solomyak \cite{FS} were the first to systematically investigate which $\beta$ satisfy property (F), with Theorem \ref{thm_fs} being the first definitive result on the matter. Notice that a simple Parry number $\beta$ with $d(1, \beta) = (c_1, \dots, c_t)$ satisfies property (F) if and only if for all $v$ such that $\mathcal{P}_{v}(\beta) > 0$, we end up in Cases 1 or 2 of Theorem \ref{thm_converge} (with $H = H_{(c_1, \dots, c_t})$). To the best of the authors' knowledge, to date there does not exist a simple characterization of those $\beta$ which satisfy property (F) (see, e.g., \cite{ABBPT}). Thus it is interesting that, at least in the gzd case, the issue of summand minimality has a simple characterization (weakly decreasing signature), but the related question of property (F) for $\beta$-expansions has an apparently more complicated answer.

Though this paper completely resolves the issue of summand minimality of PLRSs, there are several finer points to still be addressed. In Proposition \ref{prop_not_parry}, we exhibit just one number for which there exists a non-gzd representation with fewer summands than the gzd. However, in Theorem \ref{thm_parry} and Corollary \ref{cor_c_1}, we show that $(c_1+1)H_n$ has more summands than the gzd infinitely often. One may then ask
\begin{question}
If $\sigma$ does not satisfy the Parry condition, does $(c_1+1)H_n$ have more summands than the gzd infinitely often?
\end{question}

\noindent We conjecture that this question has a positive answer. One may also ask

\begin{question}
Given any $\sigma$ which is not weakly decreasing, is there a simple characterization of those $m$ for which $H_\sigma$ is not summand minimal?
\end{question}

Theorem \ref{thm_converge} itself brings up many interesting further points of investigation. A natural question is to quantify the rate at which the derived sequence converges in gzd. We personally find Case 2 of Theorem \ref{thm_converge} to be particularly interesting. In fact, we know of very few examples of pairs $v$ and $H_\sigma$ which are in Case 2. Clearly a necessary condition is that $\mathcal{P}_{(1, -c_1, \dots, -c_t)}$ be reducible in $\mathbb{Q}[x]$. It would be interesting to find nontrivial families of examples of Case 2.

As noted in the introduction, Theorem \ref{thm_converge} is closely related to Theorem \ref{thm_grabner}. However, Theorem \ref{thm_grabner} also asserts that $\textnormal{Der}((k), H_\sigma)$ converges in gzd to the $\beta$-expansion of $k$. We note more generally that if $\sigma$ satisfies the Parry condition, then for any $v$, we have that $\textnormal{Der}(v, H_\sigma)$ converges in gzd to the $\beta$-expansion of $Q_{[v]}(\beta)$ (when in Case 2, $\alpha$ as in the statement of Theorem \ref{thm_converge} is the $\beta$-expansion). From a theorem of Parry \cite{Pa}, the representation that $\textnormal{Der}(v, H_\sigma)$ converges to is the $\beta$-expansion of some number, and it is straightforward to see that it represents $Q_{[v]}(\beta)$. When $\sigma$ does not satisfy the Parry condition, then for any $v$ we have that $\textnormal{Der}(v, H_\sigma)$ still converges to \textit{a} $\beta$-expansion of $Q_{[v]}(\beta)$, though not \textit{the} $\beta$-expansion. Thus Theorem \ref{thm_converge} implies that every element in $\mathbb{Z}[\beta^{-1}]_+$ has a unique $\beta$-representation $\sum r_i \beta^i$ where the representation $(\dots, r_i, \dots)$ is composed of allowable blocks (and this representation differs from the usual $\beta$-expansion when the signature does not satisfy the Parry condition). Future investigations may involve exploring these and other implications of Theorem \ref{thm_converge} regarding the connection between $\beta$-expansions and generalized Zeckendorf decompositions.

\appendix
\section{Algorithm from any representation to the gzd}
\label{section_algorithm}
\begin{definition}\label{def:legal}
Let $\rho = (r_n, \dots, r_0, \infty_{t-1})$ be a representation using $H_\sigma$. We say $\rho$ is \textit{legal up to $\mathbf{s}$} if $(r_n, \dots, r_s)$ can be expressed as $\Lambda_1 \oplus \dots \oplus \Lambda_j$ with each $\Lambda_i$ an allowable block and $\Lambda_1 \neq (0)$ (where $\oplus$ represents concatenation).
\end{definition}

\begin{definition}\label{def:mli}
The \textit{minimum legal index (m.l.i.)} of $\rho$ is the smallest index $s$ such that $\rho$ is legal up to $s$.
\end{definition}

Notice that if $\sigma = (c_1, \dots, c_t)$ and $\rho$ is a representation whose m.l.i.\ is $s$, then $r_{s-1} \geq c_1$. If $r_{s-1} = c_1$, then $r_{s-2} \geq c_2$. At some point, we must either have that $r_{s-j} > c_j$, or for all $1 \leq j < t$, we have $r_{s-j} = c_j$ and $r_{s-t} \geq c_t$. This motivates the following definition.

\begin{definition}\label{def:vio}
Suppose $\rho$ has m.l.i.\ equal to $s$. Let $j$ be the smallest index such that $r_{s-j} >c_j$, or if $r_{s-i}=c_i$ for all $1 \leq i \leq t$, then let $j=t$. The \textit{violation index} is $s-j$ and we call $r_{s-j}$ the \textit{violation}.
\end{definition}

\begin{definition}\label{def:prefix}
Suppose $\rho$ has m.l.i.\ equal to $s$ and violation index equal to $j$. Then $(r_{s-1}, \dots, $ $r_{s-(j-1)}) = (c_1,\dots,c_{j-1})$ is called the \textit{violation prefix}. The prefix and the violation together comprise the \textit{violation block}.
\end{definition}

\begin{remark}\label{rem_left_neighbor_block}
A representation $\rho$ can be decomposed as
\begin{equation}\label{eqn_left_neighbor}
\rho\ = \ \Lambda_1 \oplus \cdots \oplus \Lambda_j \oplus \Psi \oplus (r_d, \dots, r_0, \infty_{t-1}),
\end{equation}
with each $\Lambda_i$ a valid block, and $\Psi$ the violation block. For any $\Lambda_i$ (or $\Psi$), we call the block immediately to the left of $\Lambda_i$ (or $\Psi$) as in Equation \eqref{eqn_left_neighbor} the \textit{left neighbor block} of $\Lambda_i$ (or $\Psi$). The left neighbor block of $\Lambda_1$ is defined to  be $(0)$.
\end{remark}

\begin{definition}\label{def:semleg}
Suppose $\rho$ has m.l.i.\ $s$ and violation index $j$. We say that $\rho$ is \textit{semi-legal} up to $q$ if $q = s - j + 1$. We call $q$ the \textit{semi-legal index (s.l.i.)}.
\end{definition}

\begin{remark}
We note that the s.l.i.\ is the index to the left of the violation index, i.e., the s.l.i.\ is the violation index plus one. Furthermore, the difference between the m.l.i.\ and the s.l.i.\ is exactly the length of the violation prefix, which is $0$ if the violation prefix is empty.
\end{remark}

\begin{remark}\label{rem_close_gzd}
The m.l.i.\ and the s.l.i.\ give us a way to quantify how ``close'' a representation of $m$ is to $\textnormal{GZD}(m)$. Definitions \ref{def:legal}, \ref{def:mli} and \ref{def:semleg} imply that the m.l.i.\ and the s.l.i.\ of $\rho$ are both equal to $0$ if and only if $\rho$ is allowable.
\end{remark}

\begin{example}\label{ex:abletocarry}
Let $\sigma=(3,2,4)$ and $\rho = (3, 2, 1, 1, 3, 0, 3, 3, 5, \infty_2)$. Then
$$ \rho\ =\  \left( \, \boxed{3, 2, 1}, \boxed{1}, \boxed{3, 0}, \lboxed{3,3}, 5, \infty_2 \, \right),$$
where each closed box represents a valid block, and the right-opened box ($\lboxed{3,3}$) represents the violation block. The violation index is $0$, the s.l.i.\ is $1$, and the m.l.i\ is $3$. We are able to carry to $3$ (the m.l.i.) because $r_2=3=c_1$, $r_1=3>2=c_2$ and $r_0=5>4=c_3$. After carrying, we get
$$\Big( \, \boxed{3, 2, 1}, \boxed{1}, \boxed{3, 1}, \boxed{0},\boxed{1}, \boxed{1}, \infty_2 \,\Big),$$
which is the gzd. We note that the m.l.i.\ and s.l.i.\ equal 0.
\end{example}

Now consider the same signature as Example \ref{ex:abletocarry} but with $\rho=(3,2,1,1,3,0,3,3,1, \infty_2)$. The violation index, m.l.i., and s.l.i.\ are still the same but we are not able to carry because $r_0=1<4=c_3$. This motivates the following definitions.

\begin{definition}\label{def:coi}
Suppose $\rho$ has m.l.i.\ equal to $s$. We call $s-\ell$ the \textit{carry obstruction index (c.o.i.)} if for all $1 \leq i < \ell$, we have $r_{s-i} \geq c_i$ and $r_{s-\ell} < c_\ell$.
\end{definition}

\begin{definition}\label{def:rei}
Suppose $\rho$ has m.l.i.\ equal to $s$. Then $s-e$ is called the \textit{rightmost excess index (r.e.i.)} if $e$ is the largest index such that for all $i < e$, we have $r_{s-i} \geq c_i$ and $r_{s-e} > c_e$.
\end{definition}

\begin{example}\label{ex:make_coi_large}
Consider the aforementioned example with $\sigma=(3,2,4)$ and $\rho = (3, 2, 1, 1, 3,$ $ 0, 3, 3, 1, \infty_2)$. Here, m.l.i.\ $=3$, s.l.i.\ $=2$ and the violation index is $1$. The c.o.i.\ is $0$ and the r.e.i.\ is $1$ because $r_1=3>c_2=2$. We can borrow from the r.e.i.\ to make our c.o.i.\ large enough that we are able to carry. We demonstrate this in Table \ref{table:coi}.

\begin{table}[H]
\centering
\begin{tabular}{|c||c|c|c|c|c|c|c|c|c|c|c|}
\hline
Remark ($\downarrow$)/Index ($\to$) & ${8}$ & ${7}$ &   ${6}$ &   ${5}$ &   ${4}$ &   ${3}$ &   ${2}$    &   ${1}$ &   ${0}$ &  ${-1}$  &   ${-2}$ \\ \hline \hline
${\rho}$ & $3$ &   $2$ &   $1$ &   $1$ &   $3$ &   $0$    &   $3$ &   $3$ &   $1$ &   $\infty$    &   $\infty$ \\
Borrow from 1.	&	~ &   ~ &   ~ &   ~ &   ~ &   ~    &   ~ &   $-1$ &   $3$ &   $2$    &   $4$ \\ \hline
~	&	$3$ &   $2$ &   $1$ &   $1$ &   $3$ &   $0$    &   $3$ &   $2$ &   $4$ &   $\infty$    &   $\infty$ \\
  Carry to 3.	&	~ &   ~ &   ~ &   ~ &   ~    &  $1$ &   $-3$ &   $-2$ &   $-4$    &   ~ & ~ \\ \hline
~	&	$3$ &   $2$ &   $1$ &   $1$ &   $3$ &   $1$    &   $0$ &   $0$ &   $0$ &   $\infty$    &   $\infty$ \\ \hline
\end{tabular}
\caption{Sequence of borrows and carries to move to the gzd.}
\label{table:coi}
\end{table}
\end{example}



Remark \ref{rem_close_gzd} explains how a representation can be thought of as being ``far'' from the gzd if its m.l.i.\ and s.l.i.\ are large. As such, a potential way to turn any representation into the gzd is to try decreasing the m.l.i.\ and s.l.i.\ of the representation to zero. To do so, one may start by finding the first violation and attempting to ``fix'' it. Because any valid block is lexicographically less than $\sigma$ (see Definition \ref{def:block}), the entry at the violation index is always ``too large.'' This suggests that we may try to carry in order to fix it, as in Example \ref{ex:abletocarry}. In the case where we are not able to carry, which is to say that the c.o.i.\ exists, we would first borrow from the r.e.i.\ in order to carry, as in Example \ref{ex:make_coi_large}. As one performs these borrows and carries to fix all possible violations, one would expect to decrease the m.l.i.\ and the s.l.i.\ to zero to reach the gzd. Motivated by this intuition, we introduce Algorithm \ref{alg_gzd}.  Although there is some subtlety in the algorithm, this intuition is the key idea in the proof of validity.


\begin{alg}\label{alg_gzd} \

\begin{algorithm}[H]
 \textbf{Input}:{ $\sigma$ (the signature) and $\rho$ (a representation of $m$ using $H_\sigma$)} \\
 \textbf{Output}:{ the gzd of $m$} \\
 \While{the m.l.i.\ is not 0}{
  \eIf{able to carry to m.l.i.}{
   carry to m.l.i.\\
   \While{left neighbor block is $(c_1, \dots, c_t)$}{
   	carry to next block
   }
   }{
   borrow from the r.e.i.
  }
 }
\end{algorithm}
\end{alg}

\begin{example}
Let $\sigma = (3, 2, 5)$. We apply Algorithm \ref{alg_gzd} to $\rho = (2, 3, 2, 4, 4, 3, 0, 0, \infty_2)$.

\begin{center}
\begin{tabular}{| l || l | l | l | l | l | l | l | l | l | l |}
\hline Remark ($\downarrow$)/Index ($\to$)                                                                                & 7 & 6  & 5  & 4  & 3  & 2  & 1  & 0  & -1       & -2      \\
\hline \hline
m.l.i.\ $=4$, s.l.i.\ $=4$, c.o.i.\ $=1$, r.e.i.\ $=2$. & 2 & 3  & 2  & 4  & 4  & 3  & 0  & 0  & $\infty$ & $\infty$ \\

Borrow from $2$.                                                                                                   &   &    &    &    &    & -1 & 3  & 2  & 5        &          \\
\hline
m.l.i.\ $=4$, s.l.i.\ $=4$, c.o.i.\ $=1$, r.e.i.\ $=3$. & 2 & 3  & 2  & 4  & 4  & 2  & 3  & 2  & $\infty$ & $\infty$ \\
Borrow from $3$.                                                                                                   &   &    &    &    & -1 & 3  & 2  & 5  &          &          \\
\hline
Able to carry.                                                                                                     & 2 & 3  & 2  & 4  & 3  & 5  & 5  & 7  & $\infty$ & $\infty$ \\
Carry to $4$.                                                                                                      &   &    &    & 1  & -3 & -2 & -5 &    &          &          \\
\hline
Left neighbor block $= \sigma$.                                                                                    & 2 & 3  & 2  & 5  & 0  & 3  & 0  & 7  & $\infty$ & $\infty$ \\
Carry to $7$.                                                                                                      & 1 & -3 & -2 & -5 &    &    &    &    &          &          \\
\hline
m.l.i.\ $=1$, s.l.i.\ $=1$, able to carry.                                           & 3 & 0  & 0  & 0  & 0  & 3  & 0  & 7  & $\infty$ & $\infty$ \\
Carry to $1$.                                                                                                      &   &    &    &    &    &    & 1  & -3 & -2       & -5       \\
\hline
m.l.i.\ $=1$, s.l.i.\ $=1$, able to carry.                                           & 3 & 0  & 0  & 0  & 0  & 3  & 1  & 4  & $\infty$ & $\infty$ \\
Carry to $1$.                                                                                                      &   &    &    &    &    &    & 1  & -3 & -2       & -5       \\
\hline
m.l.i.\ $=0$. We've reached the gzd.                                                                                  & 3 & 0  & 0  & 0  & 0  & 3  & 2  & 1  & $\infty$ & $\infty$\\
\hline
\end{tabular}
\end{center}
\end{example}

To assist in the proof that Algorithm \ref{alg_gzd} terminates in the gzd, we state and prove a few lemmas. We first show that the s.l.i.\ weakly decreases in Lemma \ref{lem_sli_noinc}. Then, we show that the s.l.i.\ strictly decreases after finitely many steps in Lemma \ref{lem_sli_dec}. Finally, using those two lemmas, we prove that the m.l.i.\ decreases to $0$ in finitely many steps.

\begin{lemma}\label{lem_sli_noinc}
The semi-legal index (s.l.i.) monotonically decreases during Algorithm \ref{alg_gzd}.
\end{lemma}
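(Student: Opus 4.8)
The plan is to show that a single complete pass through the outer loop of Algorithm~\ref{alg_gzd} never raises the s.l.i., and then conclude by induction on the iterations. Fix the representation at the start of such a pass, write $s$ for its m.l.i.\ and $s-j$ for its violation index, so that the s.l.i.\ equals $q = s-j+1$ and the prefix entries satisfy $r_{s-1}=c_1,\dots,r_{s-j+1}=c_{j-1}$. Exactly one of two moves is performed: a borrow from the r.e.i.\ (when we cannot carry to $s$), or a carry to $s$ followed by the inner cascade of carries. I would analyze these separately, in each case comparing the s.l.i.\ before the move with the s.l.i.\ of the resulting representation.

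\textbf{Borrow.} When the c.o.i.\ $s-\ell$ exists, the violation at $s-j$ is an excess lying to its left, so $j<\ell$, and the r.e.i.\ $s-e$ satisfies $j\le e<\ell$. Hence the prefix positions $s-1,\dots,s-j+1$, together with all indices $\ge s$, lie strictly to the left of $s-e$ and are left unchanged by $B(R,s-e)$. I would use this to argue that the block decomposition above $s$ is untouched and that $r_{s-1}\ge c_1$ still obstructs legality at $s-1$, so the m.l.i.\ stays equal to $s$; and since the equalities $r_{s-i}=c_i$ for $1\le i\le j-1$ persist, the first strict excess of the new representation occurs at an index $s-j'$ with $j'\ge j$. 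Thus the new violation index is $\le s-j$ and the new s.l.i.\ is $\le q$.

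\textbf{Carry.} The crux is the claim that after carrying to $s$ and finishing the inner cascade, the representation is legal up to $q=s-j+1$. Carrying to $s$ subtracts $c_i$ from index $s-i$ for $1\le i\le t$, so the prefix positions $s-1,\dots,s-j+1$, which held $c_1,\dots,c_{j-1}$, all become $0$. The cascading carries touch only indices $\ge s$ and, by the stopping rule of the inner loop, terminate with a genuine block decomposition of everything at indices $\ge s$. A string of valid blocks followed by the zeros now occupying $s-1,\dots,s-j+1$ parses into allowable blocks all the way down to index $s-j+1$, which yields legality up to $q$. Consequently the new m.l.i.\ is $\le q$, its violation index is $\le q-1=s-j$, and the new s.l.i.\ is $\le q$.

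I expect the carry case to be the main obstacle, and specifically the verification that the cascade restores a legal decomposition above the former violation. The subtle point is that incrementing the boundary block can turn an allowable block of length $k<t$ into the signature prefix $[c_1,\dots,c_k]$, which is not itself allowable (its last entry equals $c_k$); I would resolve this by observing that it merges with the freshly created trailing zeros into a longer allowable block, so that legality up to $q$ survives re-parsing. The borrow case, by contrast, is routine once one notes that nothing at or to the left of the prefix moves.
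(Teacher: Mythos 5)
Your borrow case is essentially sound (indeed more careful than the paper, which dismisses it in one sentence), but the carry case contains a genuine gap: the central claim that after carrying to $s$ (and running the inner cascade) the representation is legal up to $q=s-j+1$ is false, and the proposed repair --- that a block incremented to the signature prefix $[c_1,\dots,c_\ell]$ ``merges with the freshly created trailing zeros into a longer allowable block'' --- fails exactly when the signature entries $c_{\ell+1},\dots,c_{\ell+j-1}$ are all zero. In that situation $[c_1,\dots,c_\ell,0,\dots,0]$ still agrees entry-by-entry with the signature, so it remains an \emph{incomplete} block, and whether it can ever be completed depends on the entry now occupying the old violation index. Concretely, take $\sigma=(2,0,0,3)$ and $R=[1,2,0,4,3,\infty_3]$ (indices $4$ down to $0$). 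The m.l.i.\ is $4$, the violation block is $[2,0,4]$, the s.l.i.\ is $q=2$, and one is able to carry to $4$; carrying gives $[2,0,0,4,0,\infty_3]$, and the inner cascade never fires since the incremented block $[2]$ is not $[c_1,c_2,c_3,c_4]$. Reading the block starting at index $4$ against the signature gives $2,0,0,4$ versus $2,0,0,3$: it never terminates as an allowable block before producing the excess $4>3$. Hence the new representation is not legal up to $2$; in fact its m.l.i.\ has \emph{increased} to $5$. (The s.l.i.\ does remain $2$, but nothing in your argument establishes this.)

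The statement that is actually true, and that the paper proves, is weaker but sufficient: after the carry, every entry strictly to the left of the old violation index either equals the corresponding signature entry (the $c_i$ of the incremented block, and the new zeros wherever the matching signature entries vanish) or lies strictly below it, so the first possible excess --- hence the new violation index --- sits at or to the right of the old violation index; this bounds the new s.l.i.\ by $q$ with no legality claim at all. This distinction is precisely why the lemma is stated for the s.l.i.\ rather than the m.l.i.: the paper remarks after its worked example that the m.l.i.\ can increase during the algorithm, which is exactly what happens in the example above. Your ``merge with trailing zeros'' reasoning is valid only in the cascade subcase $\ell=t$, where the zeroed-out full block supplies at least $t$ trailing zeros and $c_t\ge 1$ forces the merged block to terminate within length $t$; for $\ell<t$ the carry case must instead be argued by the signature-comparison argument just described.
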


\begin{proof}
When performing Algorithm \ref{alg_gzd}, if we are not able to carry to the m.l.i.,\ then the s.l.i.\ either stays the same or decreases. Thus, for the purposes of proving the lemma, we may suppose that we are able to carry.

Suppose the left neighbor block of the violation block is $(c_1, \dots, c_{\ell-1}, d_\ell)$ with $d_\ell < c_\ell$. If $d_\ell < c_\ell -1$, then we still have a valid block after carrying. The entries of $\rho$ with indices strictly between the old m.l.i.\ and the old s.l.i.\ become zero after carrying, and thus become valid blocks. Therefore the s.l.i.\ will have either stayed the same or decreased.

Now instead suppose that $d_\ell = c_\ell - 1$ and $\ell < t$. After carrying, our representation is of the form
$$\Lambda_1 \oplus \dots \oplus \Lambda_j \oplus (c_1, \dots, c_{\ell-1}, c_{\ell}, 0, \dots, 0, v-c_{k+1}, \dots),$$
where $v$ is the original violation and the length of $0,\dots, 0$ in the middle is the length of the violation prefix before carrying.

We cannot have a violation before $v-c_{k+1}$ because a violation requires that an entry is greater than its corresponding entry in the signature; however, the first $\ell$ terms agree with the signature and the remaining terms are all zero, so they are either equal to or less than the corresponding terms in the signature. Therefore the earliest possible violation is at $v-c_{k+1}$, so in this case the s.l.i.\ either stays the same or decreases.

Finally, suppose $\ell = t$ and $d_\ell = c_t - 1$. After we carry, the left neighbor block of the violation block is now $(c_1, \dots, c_t)$, so we immediately carry again by Algorithm \ref{alg_gzd}. After the carry, our representation looks like
$$\Lambda_1 \oplus \dots \oplus \Lambda_{j-1} \oplus (c_1, c_2, \dots, c_m - r, \underbrace{0, \dots, 0}_{t}, \dots),$$
with $r \geq 0$. If $r \geq 1$, then clearly the s.l.i.\ has not increased. If $m < t$ and $r = 0$, then since we have $t$ trailing zeros, the s.l.i.\ still has not increased ($(c_1, \dots, c_m, \underbrace{0, \dots, 0}_{t})$ is composed of allowable blocks). If $r = 0$ and $m = t$, then by Algorithm \ref{alg_gzd}, we must carry yet again to the next left neighbor block and repeat the above arguments. We know that at some point the left neighbor block is not $(c_1, \dots, c_t - 1)$ since there are finitely many nonzero blocks. As such, at some point this process terminates without increasing the s.l.i.\
\end{proof}

\begin{lemma}\label{lem_sli_dec}
Suppose  s.l.i.\ $\geq 1$. Then, after finitely many steps of Algorithm \ref{alg_gzd}, the s.l.i.\ decreases.
\end{lemma}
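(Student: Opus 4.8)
The plan is to exhibit a non-negative integer monovariant living strictly below the semi-legal index that must strictly decrease each time the algorithm makes genuine progress; since the s.l.i.\ never increases by Lemma \ref{lem_sli_noinc}, bounding the number of steps for which it can stay constant will give the claim. Write $q\ge 1$ for the current s.l.i., so that the violation sits at index $q-1$, and set
\[ W' \ :=\ \sum_{i<q} r_i H_i, \]
the value represented below the s.l.i. Since the entries at negative indices are infinities contributing $0$ (as $H_i=0$ there and $\infty\times 0 = 0$), $W'$ is a well-defined non-negative integer, and it suffices to bound the number of steps during which the s.l.i.\ equals $q$.

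First I would record where the relevant indices lie. The violation is the first strict excess (at some offset $j$, so $r_{s-j}>c_j$) and the c.o.i.\ is the first strict deficiency (at offset $\ell$, so $r_{s-\ell}<c_\ell$); since all earlier offsets are non-deficient we get $\ell>j$, and the r.e.i.\ has offset $e$ with $j\le e<\ell$. In particular every borrow is applied at index $s-e\le q-1<q$, so borrows leave $W'$ unchanged. A carry to the current m.l.i.\ $s$ raises $r_s$ and lowers the entries at offsets $1,\dots,t$; as $H_s=\sum_{i=1}^t c_iH_{s-i}$ the total value is conserved, and the part of the decrement landing below $q$ (offsets $j,\dots,t$) lowers $W'$ by exactly $\sum_{i=j}^t c_iH_{s-i}\ge 0$. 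Thus $W'$ is non-increasing throughout.

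Next I would check that carries drive $W'$ down often enough. If $j=1$ the carry is to $q$ itself and $W'$ falls by at least $c_1H_{q-1}\ge 1$, using $c_1\ge 1$ and $H_{q-1}\ge H_0=1$; the same lower bound $c_jH_{q-1}\ge 1$ holds whenever $c_j\ge 1$ (note $H_{s-j}=H_{q-1}$). The only way a carry can fail to lower $W'$ is $j>1$ with $c_j=0$; but then the carry zeroes out the prefix $[c_1,\dots,c_{j-1}]$, so the m.l.i.\ strictly drops (to $q$, unless the s.l.i.\ already decreased), and the ensuing carry is to index $q$ and is productive. Hence at most $O(t)$ non-productive carries separate consecutive productive ones, and each productive carry lowers the non-negative integer $W'$ by at least $1$; therefore only finitely many carries can occur while the s.l.i.\ stays equal to $q$.

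The hard part is ruling out an infinite run of borrows between two carries. While we cannot carry, borrowing from the r.e.i.\ at offset $e$ adds $c_1\ge 1$ to offset $e+1$, which becomes the new rightmost excess, so the r.e.i.\ marches rightward toward the c.o.i.; crucially every decremented entry stays at least its signature value (an entry exceeding $c_e$ by at least $1$ stays $\ge c_e$ after one borrow, and an entry first raised by $c_1$ stays $\ge$ its signature value after being borrowed from). Consequently no new deficiency is created to the left of the c.o.i., while the deficient entry gains at least $c_1\ge 1$ once the marching r.e.i.\ reaches its left neighbor; so after finitely many borrows the c.o.i.\ is filled to $\ge c_\ell$ and can only move to a strictly larger offset, i.e.\ a strictly smaller index. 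Since negative indices are infinities and hence never deficient, the c.o.i.\ is bounded below by $0$ and can relocate only finitely often, making each borrow phase finite. Combining the three points, only finitely many steps elapse while the s.l.i.\ equals $q$; as the algorithm never halts while the m.l.i.---and hence the s.l.i.---is positive, the s.l.i.\ must strictly decrease after finitely many steps.
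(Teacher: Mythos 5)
Your carry accounting via the monovariant $W'=\sum_{i<q}r_iH_i$ is sound and genuinely different from the paper, which instead tracks the violation entry $v$ itself; the productive/non-productive dichotomy, and the observation that a non-productive carry forces the next carry to be productive (or the s.l.i.\ to drop), all check out. The gap is in your final paragraph, the analysis of borrow runs, which is exactly where the substance of this lemma lies. Two of its claims fail. First, the r.e.i.\ does not march monotonically toward the c.o.i.: when $c_1=1$, the borrow that pumps $c_1$ into the c.o.i.\ leaves the pumping entry at exactly its signature value, so the r.e.i.\ jumps back to the left. Second, and more seriously, it is not true that ``after finitely many borrows the c.o.i.\ is filled to $\ge c_\ell$.'' Your own inequality $j\le e$ allows the r.e.i.\ to coincide with the violation index, so borrows drain the violation entry $v$ itself, and this source can run dry before the c.o.i.\ is ever filled. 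Take $\sigma=(1,0,2)$ (so $H_0=H_1=H_2=1$) and the representation $[0,2,0,0,\infty,\infty]$ of $2H_2$: the m.l.i.\ is $3$, the violation is $r_2=2$, the c.o.i.\ is index $0$ (as $r_0=0<c_3=2$), and the r.e.i.\ is index $2$, equal to the violation index. The first borrow the algorithm performs gives $[0,1,1,0,\infty,\infty]$: the c.o.i.\ gains only $c_2=0$ and is never filled during this phase, while the violation ceases to be a violation and the s.l.i.\ drops from $3$ to $2$. (With $\sigma=(1,0,3)$ and $[0,3,0,0,\infty,\infty]$ you can also watch the jump-back: march, pump, jump back, and again the run ends by depletion with the c.o.i.\ still at $1<3$.)

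The miss is benign for the statement --- in the unhandled scenario the s.l.i.\ decreases outright, which is the conclusion you want --- but your proof never observes this, and its stated termination mechanism (c.o.i.\ filled, then relocated, with only finitely many relocations) simply does not occur in such runs, so finiteness of the borrow phases is not established. The repair requires precisely the idea your write-up omits and around which the paper's proof is organized: analyze borrows taken from the violation index itself. Each such borrow decrements $v$; either $v$ stays above $c_j$, in which case excess continues to flow rightward and one can bound the borrows between successive pumps by $t$ (absorbing the jump-backs in an amortized count), or $v$ reaches $c_j$, the violation block closes up, and the s.l.i.\ strictly decreases at once. With that case added, your $W'$-based carry count plus the amended borrow count would give a complete, and rather clean, alternative to the paper's argument.
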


\begin{proof}
Let $\rho$ be
$$\rho\ =\ \Lambda_1 \oplus \dots \oplus \Lambda_j \oplus (c_1, \dots, c_m, v, \dots), $$
where $0\leq m<t$, the violation is $v$, and each $\Lambda_i$ is a valid block. Suppose $\Lambda_j=(c_1,\dots,c_{\ell-1},d_\ell)$, with $d_\ell<c_\ell$.

We now show that after finitely many steps, the value of $v$ decreases. In performing Algorithm \ref{alg_gzd}, every time we borrow, the value in the rightmost excess index (r.e.i.) decreases by 1. If we keep borrowing and are never able to carry, then at some point we must have decreased the value at the original r.e.i.\ to the point where it is no longer ``excess''. At this point, the r.e.i.\ increases, and so if we never carry, then after finite time the r.e.i.\ becomes equal to the violation index. In that case, the next time we borrow, $v$ decreases.

We are left to handle the case when we are able to carry. When we carry, $v$ decreases by $c_{m+1} \geq 0$. If this amount is $c_{m+1} \geq 1$, then clearly $v$ decreases. Otherwise, $c_{m+1} = 0$ (implying $m \geq 1$). After carrying, we obtain
$$ \Lambda_1 \oplus \dots \oplus \Lambda_{j-1} \oplus (c_1, \dots, c_{\ell-1}, d_{\ell}+1, \underbrace{0,\dots,0}_{m},v,\dots).$$
There are now $m \geq 1$ zeros to the left of $v$. By Lemma \ref{lem_sli_noinc}, after performing any possible carries in the left neighbor blocks, the s.l.i.\ will at worst stay the same, in which case either the s.l.i.\ decreases (as we ultimately want) or $v$ remains the violation. 

Assuming $v$ remains the violation, then repeating the above arguments, we either have that at some point we borrow from $v$, or else we carry. In the former case, $v$ obviously decreases; in the latter case, either $v$ decreases or the violation block is of the form $(c_1, \dots, c_q, v)$ with $c_{q+1} = 0$. However at this point, since we've already carried once before (resulting in $m$ zeros to the left of $v$), we have that $c_{q-m+1} = \dots = c_q = 0$. Since $c_1 \neq 0$, we must have that $q - m \geq 1$. Then, after the second carry, the number of zeros to the left of $v$ increases by $q-m \geq 1$.

As Algorithm \ref{alg_gzd} continues, we may repeat the above arguments implying that either the s.l.i.\ decreases, or $v$ eventually decreases, or the number of zeros preceding $v$ grows larger and larger. In the last case, eventually the number of zeros preceding $v$ grows larger than $t$. Then at the next step, either the s.l.i.\ decreases, or the violation prefix has zero length, in which case $v$ decreases by $c_1 \geq 1$ after the next carry.

Thus in all cases, either the s.l.i.\ decreases, or $v$ decreases. However, if $v$ continues to decrease, then after finitely many steps $v$ must be small enough that the s.l.i.\ decreases.
\end{proof}

\begin{theorem}\label{thm_alg_term}
Algorithm \ref{alg_gzd} terminates in the gzd.
\end{theorem}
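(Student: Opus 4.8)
The plan is to prove two separate things: that the loop halts after finitely many borrow/carry operations, and that the representation it halts on is the gzd. The second is the quick half. First I would record the invariant that every borrow and carry preserves the represented value: writing out $B(R,i)$ and using $H_i = c_1 H_{i-1} + \cdots + c_t H_{i-t}$ together with the ideal initial conditions $H_{-1} = \cdots = H_{-(t-1)} = 0$ and the convention $\infty\cdot 0 = 0$, the net change in $\sum_i r_i H_i$ is $-H_i + \sum_{k=1}^{t} c_k H_{i-k} = 0$, and carry is its inverse. Since all entries stay non-negative while the value is fixed at $n$, every finite entry satisfies $r_i H_i \le n$, so throughout the run the support stays inside $\{0,1,\dots,M\}$ with $M = \max\{i : H_i \le n\}$. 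Granting termination, the loop exits only when the m.l.i. is $0$; by Definitions \ref{def:legal} and \ref{def:mli} this says the nonnegative-index part of $R$ is a concatenation of allowable blocks, i.e. a generalized Zeckendorf decomposition of $n$, which is unique by Theorem \ref{thm:gzd}. Hence the output is the gzd, and everything reduces to termination.

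For termination I would use the s.l.i. as the primary monovariant. First I would check that it is a \emph{nonnegative} integer: when scanning for the violation, the $\infty$-padding can never equal a finite signature entry $c_k$, so the prefix of equalities $r_{s-1}=c_1,\dots$ cannot extend past index $0$ into the infinities; thus the violation index is at least $-1$ and the s.l.i. (violation index plus one) is at least $0$. Now Lemma \ref{lem_sli_noinc} says the s.l.i. never increases, and Lemma \ref{lem_sli_dec} says that whenever it is $\ge 1$ it strictly decreases after finitely many steps. A weakly decreasing sequence of nonnegative integers that cannot stall at any positive value must reach $0$ after finitely many operations; so after finitely many steps the s.l.i. equals $0$ and, by Lemma \ref{lem_sli_noinc}, stays $0$ thereafter.

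The remaining and genuinely harder step is to drive the m.l.i. to $0$ once the s.l.i. is pinned at $0$. When the s.l.i. is $0$ and the m.l.i. is $p \ge 1$, the relation between them forces the low end of $R$ to be exactly $[c_1,\dots,c_p]$ at indices $p-1,\dots,0$ sitting over the $\infty$'s. Consequently we are always \emph{able to carry to the m.l.i.} in the sense of Definition \ref{def:able}: the condition $r_{p-k}\ge c_k$ holds for $k\le p$ because those entries equal $c_k$, and for $k>p$ because the relevant entry is an $\infty$. So Algorithm \ref{alg_gzd} always takes the carry branch here; carrying zeros the prefix and bumps index $p$, and the inner while-loop propagates any resulting full $[c_1,\dots,c_t]$ blocks strictly \emph{upward}. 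I would finish by a finiteness argument from bounded support: carries only move weight to higher indices and never repopulate a cleared low position, no entry can ever sit above index $M$, and the set of representations of $n$ supported in $\{0,\dots,M\}$ is finite, so the deterministic carry-only phase cannot cycle and must exit at m.l.i. $=0$.

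I expect this last paragraph to be the main obstacle, for a reason the text already flags in its worked example: the m.l.i. is \emph{not} itself a monovariant — it can momentarily increase, and it does so precisely when the signature has an internal run of zeros, so that after clearing and bumping a longer prefix $[c_1,\dots,c_{p'}]$ re-forms at the bottom with the intermediate $c_i$ equal to $0$. Because of this, termination of the post-(s.l.i.$=0$) phase cannot be read off from ``the m.l.i.\ decreases.'' I would make the finiteness rigorous either through the finite-state, no-cycle argument above, or by a secondary induction mirroring Lemma \ref{lem_sli_dec}: track the index of the lowest nonzero finite entry, observe it is weakly increasing (carries empty low positions and fill higher ones) and bounded by $M$, and note that if it ever stabilized the confined carries would force a strict decrease in the number of summands $\sum_i r_i$ (a pure carry lowers it by $\sum_k c_k - 1 \ge 1$, since $\sum_k c_k \ge c_1 + c_t \ge 2$), which is impossible indefinitely. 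Either route closes the gap and yields termination at the gzd.
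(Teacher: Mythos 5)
Your proof is correct, and its first half is the paper's own argument: you invoke Lemma \ref{lem_sli_noinc} and Lemma \ref{lem_sli_dec} to pin the s.l.i.\ at $0$, and you make the same structural observation the paper makes for the endgame, namely that s.l.i.\ $=0$ and m.l.i.\ $=p\ge 1$ force the bottom of the representation to be exactly $[c_1,\dots,c_p]$ over the infinities, so the carry branch is always taken. Where you genuinely diverge is in why this carry-only phase halts. The paper tracks the length of the violation prefix: it shows (via a case split on whether $d_\ell+1<c_\ell$, whether $\ell=t$, and the run of zeros $c_{\ell+1}=\dots=c_{\ell+m}=0$) that every outer iteration which fails to reach m.l.i.\ $=0$ strictly increases that length, which is bounded by $t-1$, giving an explicit bound of at most $t-1$ further iterations. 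You instead argue globally: borrows and carries preserve the represented value $n$, entries stay non-negative, so the support lives in $\{0,\dots,M\}$ with $M=\max\{i: H_i\le n\}$, the state space is finite, and a deterministic carry-only process cannot revisit a state because carries only push weight upward. Your route buys generality and avoids the paper's signature-specific casework; the paper's route buys an effective bound on the number of steps and stays entirely inside its block formalism. One polish point: your no-cycle claim should be formalized, and the cleanest way is the observation that a carry to index $j$ increments $r_j$ and alters only indices below $j$, so the tuple $(r_M,\dots,r_0)$ strictly increases in lexicographic order read from the top; that is airtight. By contrast, your fallback argument is the weaker of your two routes: the claim that stalled carries force a strict drop in $\sum_i r_i$ fails for impure carries (with signature $(1,0,\dots,0,1)$, a carry to index $1$ changes the summand count by $1-c_1=0$), so you would still need the strictly-increasing-targets observation to reduce to pure carries. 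Stick with the lexicographic monovariant and the argument is complete.
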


\begin{proof}
By Lemma \ref{lem_sli_dec}, the s.l.i.\ decreases to zero. Theorem \ref{thm_alg_term} holds if and only if the m.l.i.\ decreases to zero after finitely many steps. Therefore, we need only show that when the s.l.i.\ is zero, the m.l.i.\ goes to zero.

Suppose the s.l.i.\ is zero. If the m.l.i.\ is not equal to zero, then our representation is of the form $\rho=\Lambda_1 \oplus \dots \oplus \Lambda_j \oplus (c_1, \dots, c_m, \infty_{t-1})$. Suppose $\Lambda_j = (c_1, \dots, c_{\ell-1}, d_\ell)$  with $d_\l<c_\l$. We can immediately carry to the m.l.i.\ using the $\infty$ places, resulting in
$$ \Lambda_1 \oplus \dots \oplus \Lambda_{j-1} \oplus(c_1, \dots, c_{\ell-1}, d_\ell+1, \underbrace{0,\dots,0}_{m},\infty_{t-1}).$$
If $d_\l+1<c_\l$, then the m.l.i.\ is zero.

Otherwise, we have $d_\l+1=c_\l$. If $\ell = t$, then we carry to the left neighbor block as needed which ultimately decreases the m.l.i.\ to zero. If $\ell < t$, then unless $c_{\ell+1} = \dots = c_{\ell+m} = 0$, we are done. Otherwise, we must carry again, and the violation prefix grows larger. Repeating the above arguments, we see that either the algorithm terminates with the m.l.i.\ equal to zero, or else the violation prefix grows arbitrarily large. However, the violation prefix cannot grow larger than $t$, so we must eventually end up in the former case.
\end{proof}

\section{Proof of Proposition \ref{prop_min_poly}}\label{appendix_min_poly}

\begin{definition}
Let $H$ be a sequence. Define $H_{n, k}$ by
\begin{equation*}
H_{n, k}\ =\ 		\begin{bmatrix}
			H_n	&		H_{n+1}	&	\cdots	&		H_{n+k} \\
			H_{n+1}	&	H_{n+2}	&	\cdots	&		H_{n+k+1}\\
			\vdots	&	\vdots	&	\ddots	&	\vdots\\
			H_{n+k}	&	H_{n+k+1}	&	\cdots		&	H_{n+2k}\\
			\end{bmatrix}.
\end{equation*}
\end{definition}
\noindent Matrices of the form $H_{n, k}$ are known as Hankel matrices.

We need the following result of \cite{Sa}.

\begin{lemma}[Lemma 3 of \cite{Sa}]\label{lemma_salem}
A sequence satisfies some order-$k$ linear recurrence if and only if $\det(H_{n, k}) = 0$ for all $n$.
\end{lemma}

Let $\overline{H}_\ell = \{H_n\}_{n \geq \ell}$ denote the $\ell$th truncation of $H$, i.e., the sequence obtained by removing the first $\ell-1$ terms.

\begin{proposition}\label{prop_truncation}
Suppose $f$ is the minimal polynomial for $H$. Then $f$ is the minimal polynomial for all truncations $\overline{H}_\ell$.
\end{proposition}

\begin{proof}
Let $H$ be some linear recurrence sequence and let $f$ be the minimal polynomial for $H$. Let $f = x^t - c_1 x^{t-1} - \dots - c_t$. Note that $c_t \neq 0$. We shall show that $\det(H_{n, t-1}) \neq 0$ for all $n$. Let $D = \det(H_{1, t-1})$. In particular we will show that $|\det(H_{n, t-1})| = |c_t^n D|$.

We proceed by induction. Suppose we have
\begin{equation*}
H_{n, t-1} \ =\ 	\begin{bmatrix}
			H_n	&		H_{n+1}	&	\cdots	&		H_{n+t-1} \\
			H_{n+1}	&	H_{n+2}	&	\cdots	&		H_{n+t}\\
			\vdots	&	\vdots	&	\ddots	&	\vdots\\
			H_{n+t-1}	&	H_{n+t}	&	\cdots		&	H_{n+2t-2}\\
			\end{bmatrix}.
\end{equation*}
We index our columns starting from zero. Notice that the 1st through $(t-1)$th columns appear as columns in $H_{n+1, t-1}$. Furthermore, notice that if we multiply the zeroth column by $c_t$ and add to it $c_{t-1}$ times the first column, plus $c_{t-2}$ times the second column, plus etc., plus $c_1$ times the $t-1$ column, then the columns of the resulting matrix agree with the columns of $H_{n+1, t-1}$. The determinant has gone up by a factor of $c_t$.  In order to move the resulting matrix to the form of $H_{n+1, t-1}$, we need to permute some columns, which may change the sign of the determinant, but not the magnitude.

We know that $f \in \mathcal{I}(\overline{H}_n)$. However, if $f$ did not generate $\mathcal{I}(\overline{H_n})$, then there must be some polynomial of lower degree in $\mathcal{I}(\overline{H}_n)$. In particular, there must be some polynomial of degree $t-1$ in $\mathcal{I}(\overline{H}_n)$. If this were so, then $\det(H_{m, t-1})$ would be zero for all $m \geq n$ by Lemma \ref{lemma_salem}, which is a contradiction. Therefore, we must have that $f$ is the lowest degree polynomial in $\mathcal{I}(\overline{H}_n)$ for all $n$, and thus is the minimal polynomial for all $\overline{H}_n$.
\end{proof}

\begin{proof}[Proof of Proposition \ref{prop_min_poly}]
By Proposition $\ref{prop_truncation}$, it suffices to show that $\det{H_{-(t-1), t-1}} \neq 0$. It is immediate from writing out the matrix $H_{-(t-1),t-1}$ that by switching columns, we can make it lower triangular with diagonal entries all equal to $1$, and hence $\det({H_{-(t-1), t-1}})=1$.
\end{proof}

\section{Proof of Proposition \ref{prop_pos_type}} \label{appendix_pos_type}
\begin{proof}[Proof of Proposition \ref{prop_pos_type}]
Let $A$ denote the companion matrix corresponding to $f$. All entries in $A$ are non-negative. It is straightforward to check that the $t$th power of $A$ has all entries positive. Hence $A$ is a primitive matrix. Hence we may apply the Perron-Frobenius theorem to conclude that $f$ has a unique real dominating root of multiplicity one, call it $\beta$.

Notice that $\beta \geq c_1$ since $f(c_1) = -c_2 c_1^{t-2} - \dots - c_t \leq 0$. If $t \geq 2$, then $\beta > c_1 = 1$. If $c_1 \geq 2$, then $\beta \geq 2$. If $c_1 = 1$ and $t = 1$, then $f = x - 1$ which we do not consider to be of positive type.

Furthermore, notice that the coefficients of $f$ have exactly one sign change. Therefore by Descartes' rule of signs, $f$ must have at most one positive root, and hence exactly one.
\end{proof}


\bibliographystyle{alpha}
\bibliography{biblio}

\newcommand{\etalchar}[1]{$^{#1}$}
\begin{thebibliography}{ABB{\etalchar{+}}05}

\bibitem[ABB{\etalchar{+}}05]{ABBPT}
S.~Akiyama, T.~Borb\'ely, H.~Brunotte, A.~Peth\"o, and J.~M. Thuswaldner.
\newblock Generalized radix representations and dynamical systems. {I}.
\newblock {\em Acta Math. Hungar.}, 108(3):207--238, 2005.

\bibitem[Ber77]{Be}
Anne Bertrand.
\newblock D\'eveloppements en base de {P}isot et r\'epartition modulo {$1$}.
\newblock {\em C. R. Acad. Sci. Paris S\'er. A-B}, 285(6):A419--A421, 1977.

\bibitem[BG07]{BG}
Jason~P. Bell and Stefan Gerhold.
\newblock On the positivity set of a linear recurrence sequence.
\newblock {\em Israel J. Math.}, 157:333--345, 2007.

\bibitem[Bla89]{Bl}
F.~Blanchard.
\newblock {$\beta$}-expansions and symbolic dynamics.
\newblock {\em Theoret. Comput. Sci.}, 65(2):131--141, 1989.

\bibitem[Bra51]{Br}
Alfred Brauer.
\newblock On algebraic equations with all but one root in the interior of the
  unit circle.
\newblock {\em Math. Nachr.}, 4:250--257, 1951.

\bibitem[CFH{\etalchar{+}}16]{CFHMN}
Minerva Catral, Pari~L. Ford, Pamela~E. Harris, Steven~J. Miller, and Dawn
  Nelson.
\newblock Legal decomposition arising from non-positive linear recurrences.
\newblock {\em Fibonacci Quart.}, 54(4):348--365, 2016.

\bibitem[Ela99]{elaydi}
Saber~N. Elaydi.
\newblock {\em An Introduction to Difference Equations}.
\newblock Springer-Verlag, 1999.

\bibitem[Fra85]{Fra}
Aviezri~S. Fraenkel.
\newblock Systems of numeration.
\newblock {\em Amer. Math. Monthly}, 92(2):105--114, 1985.

\bibitem[Fro92]{Fr}
Christiane Frougny.
\newblock Representations of numbers and finite automata.
\newblock {\em Math. Systems Theory}, 25(1):37--60, 1992.

\bibitem[FS92]{FS}
Christiane Frougny and Boris Solomyak.
\newblock Finite beta-expansions.
\newblock {\em Ergodic Theory Dynam. Systems}, 12(4):713--723, 1992.

\bibitem[FS99]{FS2}
Michael Filaseta and Ikhalfani Solan.
\newblock An extension of a theorem of {L}junggren.
\newblock {\em Math. Scand.}, 84(1):5--10, 1999.

\bibitem[GT90]{GT}
P.~J. Grabner and R.~F. Tichy.
\newblock Contributions to digit expansions with respect to linear recurrences.
\newblock {\em J. Number Theory}, 36(2):160--169, 1990.

\bibitem[GTNP94]{GTNP}
P.~J. Grabner, R.~F. Tichy, I.~Nemes, and A.~Peth\"o.
\newblock Generalized {Z}eckendorf expansions.
\newblock {\em Appl. Math. Lett.}, 7(2):25--28, 1994.

\bibitem[HW12]{HW}
Nathan Hamlin and William~A. Webb.
\newblock Representing positive integers as a sum of linear recurrence
  sequences.
\newblock {\em Fibonacci Quart.}, 50(2):99--105, 2012.

\bibitem[Lek52]{Le}
C.~G. Lekkerkerker.
\newblock Representation of natural numbers as a sum of {F}ibonacci numbers.
\newblock {\em Simon Stevin}, 29:190--195, 1952.

\bibitem[MW12]{MW}
Steven~J. Miller and Yinghui Wang.
\newblock From {F}ibonacci numbers to central limit type theorems.
\newblock {\em J. Combin. Theory Ser. A}, 119(7):1398--1413, 2012.

\bibitem[Ost22]{Os}
Alexander Ostrowski.
\newblock Bemerkungen zur {T}heorie der {D}iophantischen {A}pproximationen.
\newblock {\em Abh. Math. Sem. Univ. Hamburg}, 1(1):77--98, 1922.

\bibitem[Par60]{Pa}
W.~Parry.
\newblock On the {$\beta $}-expansions of real numbers.
\newblock {\em Acta Math. Acad. Sci. Hungar.}, 11:401--416, 1960.

\bibitem[PT89]{PT}
Attila Peth\"o and Robert~F. Tichy.
\newblock On digit expansions with respect to linear recurrences.
\newblock {\em J. Number Theory}, 33(2):243--256, 1989.

\bibitem[R{\'e}n57]{Re}
A.~R{\'e}nyi.
\newblock Representations for real numbers and their ergodic properties.
\newblock {\em Acta Math. Acad. Sci. Hungar}, 8:477--493, 1957.

\bibitem[Sal63]{Sa}
Rapha\"el Salem.
\newblock {\em Algebraic numbers and {F}ourier analysis}.
\newblock D. C. Heath and Co., Boston, Mass., 1963.

\bibitem[Sch62]{Schi}
A.~Schinzel.
\newblock Solution d'un probl\`eme de {K}. {Z}arankiewicz sur les suites de
  puissances cons\'ecutives de nombres irrationnels.
\newblock {\em Colloq. Math.}, 9:291--296, 1962.

\bibitem[Sch80]{Schm}
Klaus Schmidt.
\newblock On periodic expansions of {P}isot numbers and {S}alem numbers.
\newblock {\em Bull. London Math. Soc.}, 12(4):269--278, 1980.

\bibitem[Sha94]{Sh}
Jeffrey Shallit.
\newblock Numeration systems, linear recurrences, and regular sets.
\newblock {\em Inform. and Comput.}, 113(2):331--347, 1994.

\bibitem[Zec72]{Ze}
E.~Zeckendorf.
\newblock Repr\'esentation des nombres naturels par une somme de nombres de
  {F}ibonacci ou de nombres de {L}ucas.
\newblock {\em Bull. Soc. Roy. Sci. Li\`ege}, 41:179--182, 1972.

\end{thebibliography}

\end{document}